\documentclass[12pt]{article}
\usepackage[T1]{fontenc}
\usepackage[latin9]{inputenc}
\usepackage[letterpaper]{geometry}
\geometry{verbose,tmargin=1in,bmargin=1in,lmargin=1in,rmargin=1in}
\usepackage{amsthm}
\usepackage{amsmath}
\usepackage{graphicx}
\usepackage{setspace}
\usepackage{amssymb}
\usepackage[subrefformat=parens,labelformat=parens]{subfig}
\usepackage{fullpage,array,natbib,authblk}
\usepackage{hyperref}

\setstretch{1.5}
\pdfminorversion=4
\makeatletter

\theoremstyle{plain}
\newtheorem{thm}{Theorem}
  \theoremstyle{plain}
  
  \theoremstyle{plain}
  
  \theoremstyle{plain}
  
\newcommand{\col}{\rm c}
\newcommand{\row}{\rm r}
\long\def\symbolfootnote[#1]#2{\begingroup%
\def\thefootnote{\fnsymbol{footnote}}\footnote[#1]{#2}\endgroup}


\@ifundefined{showcaptionsetup}{}{%
 \PassOptionsToPackage{caption=false}{subfig}}
\usepackage{subfig}
\def\keywords#1{{\vskip4pt
\noindent
\hbox to59.5pt{KEY\enspace WORDS:\quad\hss}\vtop{\advance \hsize by -59.5pt
\leftskip=28pt \rightskip=0pt
\noindent\ignorespaces#1\vskip8pt}}}

\begin{document}

\title{Testing for nodal dependence in relational data matrices}

\author[1]{Alexander Volfovsky}
\author[2]{Peter D. Hoff}
\affil[1]{Department of Statistics, University of Washington}
\affil[2]{Departments of Statistics and Biostatistics, University of Washington}

\maketitle

\symbolfootnote[0]{
This work was partially supported by
NICHD grant 1R01HD067509-01A1. The authors would like to thank
Bailey Fosdick for helpful discussions.}


\begin{abstract}
Relational data are often represented as a square matrix, the entries
of which record the relationships between pairs of objects. Many statistical
methods for the analysis of such data assume some degree of similarity
or dependence between objects in terms of the way they relate to each 
other. However, formal tests for such dependence have not been developed.
We provide a test for such dependence 
using the framework of the matrix normal model, a type of multivariate normal
distribution 
parameterized in terms of row- and column-specific covariance matrices. 
We develop a likelihood ratio test (LRT) for row and column dependence based on the
observation of a single relational data matrix. We obtain
a reference distribution for the LRT statistic, thereby providing an exact 
test for the presence of row or column correlations in a square relational data matrix.
Additionally, we provide extensions of the test to accommodate common features
of such data, such as undefined diagonal entries, a non-zero mean, multiple observations,
and deviations from normality. 
\keywords{Networks; Matrix normal; hypothesis testing; Maximum likelihood}
\end{abstract}

\section{Introduction}

Networks or relational data among $m$ actors, nodes or objects are frequently
presented in the form of an $m\times m$  matrix $Y=\left\{ y_{ij}:1\leq i,j\leq m\right\} $,
where the entry $y_{ij}$ corresponds to a measure of the directed
relationship from object $i$ to object $j$. 
Such data are of interest in a variety of scientific disciplines: 
Sociologists and epidemiologists
gather friendship network data to study social development and health
outcomes among children \citep{fletcher2011you,pollard2010friendship,potter2012estimating,van1999friendship}, 
economists study markets by analyzing networks of business interactions
among companies or countries \citep{westveld2011mixed,lazzarini2001integrating}, and
biologists study gene-gene interaction networks to better understand biological pathways
\citep{bergmann2003similarities,stuart2003gene}. 

Often of interest in
the study of such data is a description of the variation and similarity
among the objects in terms of their relations. 
Similarities among rows and among columns in empirical networks have
long been observed \citep{sampson1968novitiate,leskovec2008statistical}, 
leading to 
the development of statistical tools to summarize
such patterns.
CONCOR (CONvergence of iterated CORrelations)
is an early example of a procedure that partitions the
rows (or columns) of $Y$ into groups based on a summary of the correlations
among the rows (or columns) of $Y$
\citep{white1976social, mcquitty1968clusters}. 
The procedure
yields a ``blockmodel'' of the objects, a representation of the original
data matrix $Y$ by a smaller matrix that identifies relationships
among groups of objects. While this algorithm
is still commonly used \citep{lincoln2004japan,lafosse2006simultaneous},
it suffers from a lack of 
statistical interpretability  \citep{panning1982fitting}, as it is not tied to 
any particular statistical model or inferential goal. 

Several model-based approaches 
presume the existence of a grouping of the objects  such 
that objects
within a group share a common 
distribution for their outgoing relationships. 
This is the notion of stochastic equivalence,  and is the primary 
assumption of stochastic blockmodels, a class of models for which 
the probability of a relationship between two objects depends
only on their individual group memberships \citep{holland1983stochastic,wang1987stochastic,nowicki2001estimation,rohe2011spectral}. \citet{airoldi2008mixed} 
extend the basic blockmodel 
by allowing each object to belong to several groups.
In this model the probability of a relationship between two nodes 
depends on all the group memberships of each object. 
This and other variants of 
stochastic blockmodels
belong to the larger class of latent variable models,  in which the probability
distribution of the relationship between any two objects $i$ and $j$ depends
on unobserved  object-specific latent characteristics $z_i$ and $z_j$
\citep{hoff2002latent}. 
Statistical models of this type all presume some form of 
similarity among the objects in the network. 
However, while such models are widely used and studied, 
no formal test for similarities
among the objects in terms of their relations has been proposed. 

Many statistical  methods for valued or continuous relational data
are developed in the context of normal statistical models. 
These include, for example, the widely-used 
social relations model  
\citep{kenny1984social,li_loken_2002} 
and covariance models for multivariate relational data
\citep{li_2006,westveld_hoff_2011,hoff2011separable}. 
Additionally, statistical models for binary and ordinal relational data
can be based on latent normal random variables via 
probit or  other link functions 
\citep{hoff_2005,hoff_2008_nips}. 
In this article we propose
a novel approach to testing
for similarities between  objects 
in terms of the 
row and  column correlation parameters of
the matrix normal model. The matrix normal
model consists of the multivariate normal distributions that have a 
Kronecker-structured covariance matrix \citep{dawid1981some}.  
Specifically,  we say that an $m\times m$ random matrix $Y$ has the 
mean-zero matrix normal distribution $N_{m\times m}(0,\Sigma_{\row},\Sigma_{\col})$ if 
${\rm vec}\left(Y\right)\sim N_{m^{2}}\left(0,\Sigma_{\col}\otimes\Sigma_{\row}\right) $
where ``vec'' is the vectorization operator and 
``$\otimes$'' denotes the Kronecker product.
Under this distribution, 
the covariance between two relations $y_{ij}$ and $y_{kl}$
is given by ${\rm cov}\left(y_{ij},y_{kl}\right)=\Sigma_{{\row},ik}\Sigma_{{\col},jl}$. 
Furthermore,  it is straightforward to show that 
\begin{align*}
E\left[YY^{t}\right] =  \Sigma_{\row}{\rm tr}\left(\Sigma_{\col}\right)\ \text{and}\ 
E\left[Y^{t}Y\right] =  \Sigma_{\col}{\rm tr}\left(\Sigma_{\row}\right).
\end{align*}
These 
identities suggest the interpretation of $\Sigma_{\row}$ and $\Sigma_{\col}$ 
as the covariance of the objects as senders of ties and as receivers of ties, 
respectively. 
In this article, 
we evaluate evidence for similarities between objects by testing 
for non-zero correlations in this matrix normal model. 
Specifically, we develop a 
test of \begin{align*}
H_{0}:  \  ( \Sigma_{\row},\Sigma_{\col} ) \in\mathcal{D}_{+}^{m} \times  \mathcal{D}_{+}^{m}  \ \ \text{versus} \
H_{1}:  \  (\Sigma_{\row},\Sigma_{\col}) \in (\mathcal{S}_{+}^{m} \times  
   \mathcal{S}_{+}^{m} ) \backslash( \mathcal{D}_{+}^{m}
  \times \mathcal{D}_{+}^{m} ) \end{align*}
where $\mathcal{D}_{+}^{m}$ is the set of $m\times m$ diagonal matrices
with positive entries and $\mathcal{S}_{+}^{m}$ is the set of $m\times m$
positive definite symmetric matrices. Model $H_{0}$, which we call
the Kronecker variance model, represents heteroscedasticity among
the rows and the columns while still maintaining their independence. Model
$H_{1}$, which we call the full Kronecker covariance model, allows
for correlations between all of the rows and all the of columns. 
Rejection of the null
of zero correlation would support further inference via
a model that allowed for similarities
among the objects, such as a stochastic blockmodel,
some other latent variable model or the matrix normal model.
Acceptance of the null would caution against
fitting such a model in order to avoid spurious inferences. 

This goal of evaluating  the evidence for row or column correlation
 is in contrast to that of the existing 
testing literature for matrix normal distributions. 
This literature  has 
focused on an  
evaluation of  the null hypothesis that  ${\rm cov}({\rm vec}(Y)) =\Sigma_{\col}\otimes\Sigma_{\row}$
(our $H_{1}$) against an 
unstructured alternative
\citep{roy2005implementation,mitchell2006likelihood,lu2005likelihood,srivastava2008models}.
The tests proposed in
this literature are likelihood ratio tests that,
 in the case of an $m\times m$ square matrix $Y$, 
 require at 
least $n>m^{2}$ replications
to estimate the covariance under the fully
unstructured model. 
Such tests are 
not applicable to most relational datasets, 
which 
typically consist of 
at most
a few observed relational matrices. 

In the next section we derive a hypothesis test of 
$H_0$ versus $H_1$ in the context of the matrix normal model. 
We show that given a single observed relational matrix $Y$, 
the likelihood is bounded under
both $H_{0}$ and $H_{1}$ and so a likelihood ratio test of $H_{0}$
against $H_{1}$ can be constructed. 
We further show how the 
null distribution of the test statistic can be approximated 
with an arbitrarily high  precision via a Monte Carlo procedure. 
In Section \ref{sec:ellipt} we extend these results to the general
class of matrix variate elliptically contoured distributions.
The power of the test in several different situations is evaluated in 
Section \ref{sec:Power-simulations}. 

Although the development of our testing procedure is 
based on the  
mean-zero matrix normal distribution, 
it is straightforward to extend the test to 
several other scenarios commonly  encountered in the study of relational data, 
including 
missing diagonal entries, non-zero
mean structure, multiple heteroscedastic
observations and binary networks. 
These extensions and two data examples are discussed in Section \ref{sec:Extensions-and-data}. 
 A discussion follows in Section 5.  

\section{\label{sec:Likelihood-ratio-test}Likelihood ratio test}

In this section we propose a likelihood ratio test (LRT) for evaluating the 
presence of correlations among the rows and correlations
among the columns of a square matrix. The data matrix $Y$ is modeled
as a draw from a mean zero matrix normal distribution $N_{m\times m}\left(0,\Sigma_{\row},\Sigma_{\col}\right)$.
The parameter space under the null hypothesis $H_{0}$ is $\Theta_0=\mathcal{D}_{+}^{m} \times  \mathcal{D}_{+}^{m}$,
the space of all pairs  of diagonal $m\times m$ matrices with positive entries. Under
the alternative $H_{1}$, the parameter space is $\Theta_1=(\mathcal{S}_{+}^{m} \times  \mathcal{S}_{+}^{m} )\backslash (\mathcal{D}_{+}^{m} \times  \mathcal{D}_{+}^{m}  )$,
the collection of all pairs  of positive definite matrices of dimension $m$ 
for which at least one is not diagonal. 
To derive the LRT statistic, 
we first obtain the maximum likelihood estimates (MLEs) under  
the unrestricted parameter space 
$\Theta= \Theta_0 \cup \Theta_1$ 
 and under the null parameter space $\Theta_0$. 
From these MLEs, we construct 
several equivalent forms of the 
LRT
statistic. 
While the null distribution of the test statistic is not available 
in closed form, the statistic is invariant under 
diagonal rescalings of the data matrix $Y$, implying that 
the distribution of the statistic 
is constant as a function of $(\Sigma_{\row},\Sigma_{\col} ) \in \Theta_0$. 
This fact allows us to obtain null distributions and $p$-values 
via Monte Carlo simulation. 

\subsection{Maximum likelihood estimates}\label{sub:MLE}
The density of a mean zero matrix normal  distribution $N_{m\times m}(0,\Sigma_{\row},\Sigma_{\col})$
is given by\begin{eqnarray*}
p\left(Y|\Sigma_{\row},\Sigma_{\col}\right) & = & 
\left(2\pi\right)^{-m^{2}/2}  
\left|\Sigma_{\col}\otimes\Sigma_{\row}\right|^{-1/2}
\exp (  -\tfrac{1}{2}{\rm tr}\left(\Sigma_{\row}^{-1}Y\Sigma_{\col}^{-1}Y^{t}\right)),\end{eqnarray*}
where ``tr'' is the matrix trace and ``$\otimes$'' is the Kronecker product. 
Throughout 
the article we will write $l\left(\Sigma_{\row},\Sigma_{\col};Y\right)$
as minus two times the log likelihood minus $m^2\log 2\pi$, hereafter referred to as the
scaled log likelihood:
\begin{eqnarray}
l\left(\Sigma_{\row},\Sigma_{\col};Y\right) & = & -2\log p\left(Y|\Sigma_{\row},\Sigma_{\col}\right)  - m^2 \log 2 \pi  ={\rm tr}\left[\Sigma_{\row}^{-1}Y\Sigma_{\col}^{-1}Y^{t}\right]-\log\left|\Sigma_{\col}^{-1}\otimes\Sigma_{\row}^{-1}\right|. 
\label{eq:ld}\end{eqnarray}
We will 
state all the results in this paper in terms of $l\left(\Sigma_{\row},\Sigma_{\col};Y\right)$.
For example, an MLE will be a  minimizer
of  $l\left(\Sigma_{\row},\Sigma_{\col};Y\right)$ in $(\Sigma_{\row},\Sigma_{\col})$. 
The following result implies that if $Y$ is a draw from an 
absolutely continuous distribution on $\mathbb R^{m\times m}$, then the 
scaled likelihood is bounded from below, and achieves this bound on a set of 
nonunique MLEs:
\begin{thm}
\label{thm:alt_not_unique}  
If $Y$ is full rank then  $l( \Sigma_{\row},\Sigma_{\col} ; Y)  
 \geq  m^2 + m\log |YY^t/m| $ for all 
$(\Sigma_{\row},\Sigma_{\col} )\in \Theta$, with equality 
if 
$\Sigma_{\row} =  Y{\Sigma}_{\col}^{-1}Y^{t}/m$, or  equivalently, 
 $\Sigma_{\col}= Y^{t}{\Sigma}_{\row}^{-1}Y/m$. 
\end{thm}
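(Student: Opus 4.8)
The plan is to minimize $l$ by profiling out one covariance matrix at a time, exploiting the over-parameterization of the model. First I would apply the Kronecker determinant identity $|\Sigma_{\col}^{-1}\otimes\Sigma_{\row}^{-1}| = |\Sigma_{\col}|^{-m}|\Sigma_{\row}|^{-m}$ to rewrite \eqref{eq:ld} as
\begin{equation*}
l(\Sigma_{\row},\Sigma_{\col};Y) = {\rm tr}\bigl[\Sigma_{\row}^{-1}Y\Sigma_{\col}^{-1}Y^{t}\bigr] + m\log|\Sigma_{\row}| + m\log|\Sigma_{\col}|.
\end{equation*}
Fixing $\Sigma_{\col}$, I set $S = Y\Sigma_{\col}^{-1}Y^{t}$, which lies in $\mathcal{S}_{+}^{m}$ because $Y$ is full rank and $\Sigma_{\col}\in\mathcal{S}_{+}^{m}$, and study $l$ as a function of $\Sigma_{\row}$ alone.

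The key step is to change variables to the eigenvalues of $M = \Sigma_{\row}^{-1}S/m$. This matrix is similar to the positive definite matrix $\Sigma_{\row}^{-1/2}(S/m)\Sigma_{\row}^{-1/2}$, so its eigenvalues $\lambda_1,\dots,\lambda_m$ are real and positive. Since ${\rm tr}[\Sigma_{\row}^{-1}S] = m\sum_i\lambda_i$ and $m\log|\Sigma_{\row}| = m\log|S/m| - m\sum_i\log\lambda_i$, the likelihood takes the form
\begin{equation*}
l(\Sigma_{\row},\Sigma_{\col};Y) = m\log|S/m| + m\log|\Sigma_{\col}| + m\sum_{i=1}^{m}(\lambda_i - \log\lambda_i).
\end{equation*}
Applying the scalar inequality $x - \log x \geq 1$ for $x>0$, with equality exactly at $x=1$, gives $\sum_i(\lambda_i-\log\lambda_i)\geq m$, so $l \geq m^2 + m\log|S/m| + m\log|\Sigma_{\col}|$, with equality precisely when all $\lambda_i = 1$, i.e. $M = I$, i.e. $\Sigma_{\row} = S/m = Y\Sigma_{\col}^{-1}Y^{t}/m$.

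It remains to simplify the constant and confirm its independence from $\Sigma_{\col}$. Since $Y$ is square and full rank, $\det(Y\Sigma_{\col}^{-1}Y^{t})\,|\Sigma_{\col}| = |YY^{t}|$, whence $|S/m|\,|\Sigma_{\col}| = |YY^{t}/m|$ and $m\log|S/m| + m\log|\Sigma_{\col}| = m\log|YY^{t}/m|$. The profiled bound is therefore the claimed $m^2 + m\log|YY^{t}/m|$, independent of the fixed $\Sigma_{\col}$; this disappearance reflects the scale invariance $(\Sigma_{\row},\Sigma_{\col})\mapsto(c\Sigma_{\row},c^{-1}\Sigma_{\col})$ of the density. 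The equivalence of the two stationarity conditions then follows by direct manipulation: inverting $m\Sigma_{\row}=Y\Sigma_{\col}^{-1}Y^{t}$ and conjugating by $Y^{t}$ on the left and $Y$ on the right yields $Y^{t}\Sigma_{\row}^{-1}Y = m\Sigma_{\col}$.

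The point to emphasize, rather than a computational obstacle, is that the minimizer is genuinely non-unique: for every fixed $\Sigma_{\col}$ there is a distinct optimal $\Sigma_{\row}$, and all of these pairs attain the same value. This is exactly why the theorem claims equality \emph{if} the stationarity relation holds and not \emph{only if}, and this non-uniqueness is what the construction of the likelihood ratio test in the remainder of the section must be designed to handle.
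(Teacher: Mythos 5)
Your proof is correct and rests on the same core reduction as the paper's: both pass to the spectrum of $\Sigma_{\row}^{-1}Y\Sigma_{\col}^{-1}Y^{t}/m$ and invoke the scalar inequality $x-\log x\geq 1$. The only difference is organizational --- you profile out $\Sigma_{\row}$ for fixed $\Sigma_{\col}$ and apply the inequality termwise to each eigenvalue, whereas the paper first evaluates $l$ at the solutions of the score equations and then bounds the difference from an arbitrary point by applying the inequality to the arithmetic mean of the eigenvalues together with the AM--GM inequality; your variant has the minor advantage of yielding the equality condition (all $\lambda_i=1$) directly.
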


\begin{proof}
We first look for MLEs at the critical points of 
$l\left(\Sigma_{\row},\Sigma_{\col};Y\right)$. 
Setting derivatives of $l$ to zero indicates that critical points satisfy
\begin{eqnarray}
\hat \Sigma_{\row} & = & Y\hat \Sigma_{\col}^{-1}Y^{t}/m\label{eq:full_flip}\\
\hat \Sigma_{\col} & = & Y^{t}\hat \Sigma_{\row}^{-1}Y/m.\label{eq:full_flip2}\end{eqnarray}
Note that these equations are redundant: If $(\hat \Sigma_{\row},
  \hat \Sigma_{\col}) $ satisfy Equation \ref{eq:full_flip}, 
then these values satisfy Equation \ref{eq:full_flip2} as well. 
The value of the scaled log likelihood at such a critical point is 
\begin{eqnarray}
l\left(Y\hat \Sigma_{\col}^{-1}Y^{t}/m,\hat \Sigma_{\col};Y\right) & = & {\rm tr}\left[\left(Y\hat\Sigma_{\col}^{-1}Y^{t}/m\right)^{-1}Y\hat\Sigma_{\col}^{-1}Y^{t}\right]+\log\left|\hat\Sigma_{\col}\otimes Y\hat\Sigma_{\col}^{-1}Y^{t}/m\right|\nonumber \\
 & = & m{\rm tr}\left[Y^{-t}\hat\Sigma_{\col}Y^{-1}Y\hat\Sigma_{\col}^{-1}Y^{t}\right]+m\log\left|\hat\Sigma_{\col}\right|-m\log\left|\hat\Sigma_{\col}\right|+m\log\left|YY^{t}/m\right|\nonumber\\
 & = & m{\rm tr}\left[I\right]+m\log\left|YY^{t}/m\right|.\label{eq:finite_lik} \end{eqnarray}
In the second and third lines, $Y^{-1}$ exists and $|YY^{t}/m|>0$ 
since $Y$ is
square 
and
full rank. 
Now we compare the scaled log likelihood at a critical point 
to its value at any 
other point. 
\begin{align}
l\left(\Sigma_{\row},\Sigma_{\col};Y\right)-l(Y\hat{\Sigma}_{\col}^{-1}Y^{t}/m,\hat{\Sigma}_{\col};Y) & =  
\text{tr}\left[\Sigma_{\row}^{-1}Y\Sigma_{\col}^{-1}Y^{t}\right]   
 + m\log\left(\left|\Sigma_{\row}\right|\left|\Sigma_{\col}\right|\right) 
 -m^{2}  -m\log\left|YY^{t}/m\right|  \\
 &= m^{2}\left[\frac{1}{m}\mbox{tr}\left[\Sigma_{\row}^{-1}Y\Sigma_{\col}^{-1}Y^{t}/m\right]-\frac{1}{m}\log\left|\Sigma_{\row}^{-1}Y\Sigma_{\col}^{-1}Y^{t}/m\right|-1\right]\nonumber
\end{align}
The first equality is a simple combination of equations \ref{eq:ld} and \ref{eq:finite_lik}. The second equality is a rearrangement of terms that combines all the determinant in the log terms.
This difference can be written as $m^2 (a-\log g  -1 )$, where 
$a$ is the arithmetic mean and $g$ is the geometric mean of
the eigenvalues of $(\Sigma_{\row}^{-1/2}Y\Sigma_{\col}^{-1}Y^{t}\Sigma_{\row}^{-1/2})/m$.
To complete the proof we show that $a-\log g-1\geq0$. Consider $f\left(x\right)=x-1-\log x$
and its first and second derivatives with respect to $x$: $f^{\prime}\left(x\right)=1-\frac{1}{x}$,
and $f^{\prime\prime}\left(x\right)=\frac{1}{x^{2}}$. The second
derivative is positive at the critical point $x=1$, so $f\left(1\right)=0$
is a global minimum of the function. Thus $x-\log x-1\geq0$. Now
let $\lambda_1,\dots,\lambda_m$ be the eigenvalues of $(\Sigma_{\row}^{-1/2}Y\Sigma_{\col}^{-1}Y^{t}\Sigma_{\row}^{-1/2})/m$ and so $a=\frac{1}{m}\sum \lambda_{i}$ and $g=\left(\prod \lambda_{i}\right)^{1/m}$. We then have
\[
a\geq\log a+1=\log\left(\frac{1}{m}\sum x_{i}\right)+1\geq\log\left(\left(\prod x_{i}\right)^{1/m}\right)+1=\log g +1\]
as $a\geq g$ since $\lambda_{i}\geq0$ $\forall i$.
Since $a-1-\log g\geq 0$ we have the desired result. 
\end{proof}
Note that the MLE is not unique,
nor is the MLE of $\Sigma_{\col} \otimes \Sigma_{\row}$.  
For example.
$I\otimes YY^t/m$ is an MLE of $\Sigma_{\col} \otimes \Sigma_{\row}$, as is
$Y^tY \otimes I /m$.
Moreover, there is an MLE for each $\Sigma_{\row} \in \mathcal S_{+}^m$ given 
by $( \Sigma_{\row} ,   Y^{t}{\Sigma}_{\row}^{-1}Y/m)$, and similarly 
there is an MLE for each  $\Sigma_{\col} \in \mathcal S_{+}^m$  
given by $( Y \Sigma_{\col}^{-1} Y^t/m ,  \Sigma_{\col} )$.

Theorem \ref{thm:alt_not_unique} also implies that the likelihood is bounded under the null.  Unlike the unrestricted case, the MLE under the null 
is unique up to scalar multiplication: 
\begin{thm}
\label{lem:null_unique} If $Y$ is full rank then
the MLE $\hat{D}_{\col}\otimes\hat{D}_{\row}$ under
$H_0$ is unique,
while $\hat{D}_{\row}$ and $\hat{D}_{\col}$ are unique up to a multiplication and division by the same positive scalar.
\end{thm}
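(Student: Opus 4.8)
The plan is to combine the explicit form of the likelihood under $H_0$ with a logarithmic reparameterization that makes the objective convex. Writing $D_\row=\mathrm{diag}(a_1,\dots,a_m)$ and $D_\col=\mathrm{diag}(b_1,\dots,b_m)$ with all $a_i,b_j>0$, the identity $|D_\col\otimes D_\row|=(\prod_i a_i)^m(\prod_j b_j)^m$ reduces equation~\ref{eq:ld} to
\begin{equation*}
l(D_\row,D_\col;Y)=\sum_{i,j}\frac{y_{ij}^2}{a_i b_j}+m\sum_i\log a_i+m\sum_j\log b_j .
\end{equation*}
The first thing I would record is the exact source of non-identifiability: for every $c>0$ the reparameterization $(D_\row,D_\col)\mapsto(cD_\row,c^{-1}D_\col)$ leaves both $l$ and the Kronecker product $D_\col\otimes D_\row$ unchanged. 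Since $a_i b_j=\tilde a_i\tilde b_j$ for all $i,j$ holds if and only if $\tilde a_i=c\,a_i$ and $\tilde b_j=c^{-1}b_j$ for a single constant $c$, the two assertions of the theorem are equivalent, and it suffices to prove that any two minimizers of $l$ are related by this one scaling.

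Next I would pass to the coordinates $\alpha_i=\log a_i$ and $\beta_j=\log b_j$, in which $l=\sum_{i,j}y_{ij}^2e^{-\alpha_i-\beta_j}+m\sum_i\alpha_i+m\sum_j\beta_j$ is a sum of exponentials of affine functions plus linear terms, and is therefore convex. A short computation shows that its Hessian acts on a perturbation $(u,v)\in\mathbb R^m\times\mathbb R^m$ through the quadratic form $\sum_{i,j}y_{ij}^2e^{-\alpha_i-\beta_j}(u_i+v_j)^2\ge 0$, which confirms convexity and, at the same time, exhibits precisely the directions in which strict convexity can fail. Setting the gradient to zero returns the fixed-point equations $m a_i=\sum_j y_{ij}^2/b_j$ and $m b_j=\sum_i y_{ij}^2/a_i$, the diagonal analogues of equations~\ref{eq:full_flip}--\ref{eq:full_flip2}; equivalently, the matrix with entries $y_{ij}^2/(a_i b_j)$ has every row sum and every column sum equal to $m$, so a minimizer is exactly a Sinkhorn (matrix-scaling) balancing of $(y_{ij}^2)$. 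Existence of a minimizer then follows from convexity together with the coercivity of $l$ along every direction not parallel to the scaling line $(\mathbf 1,-\mathbf 1)$.

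The heart of the proof is then to identify the kernel of the Hessian. The quadratic form above vanishes if and only if $u_i+v_j=0$ for every pair $(i,j)$ with $y_{ij}\ne 0$. Under the absolutely continuous model of Theorem~\ref{thm:alt_not_unique} the entries of $Y$ are almost surely all nonzero, so this condition must hold for every $(i,j)$; propagating it forces all $u_i$ to equal a common value $c$ and all $v_j$ to equal $-c$, so the kernel is exactly $\mathrm{span}\{(\mathbf 1,-\mathbf 1)\}$. Because $l$ is convex, its set of minimizers is convex and lies in a single coset of this kernel, whence any two minimizers differ only by the scaling $(cD_\row,c^{-1}D_\col)$ and $\hat D_\col\otimes\hat D_\row$ is unique. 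I expect this identification of the kernel to be the main obstacle: it is the one place where a property of the support of $Y$ — connectivity of the bipartite graph of its nonzero entries, here guaranteed by the nonvanishing of all $y_{ij}$ — is essential, since if that support split into separate blocks one could rescale the blocks independently and the Kronecker product would no longer be determined.
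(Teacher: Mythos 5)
Your proof is correct, and it takes a genuinely different route from the paper's. The paper works in the precision parameters $\Psi=D_\row^{-1}$, $\Gamma=D_\col^{-1}$, shows via Sylvester's criterion that the Hessian is strictly positive definite at every critical point after pinning one coordinate, and then invokes the Mountain Pass Theorem to rule out two distinct strict local minima; uniqueness there is a consequence of a local computation plus a global topological argument. You instead pass to $\alpha_i=\log a_i$, $\beta_j=\log b_j$, observe that $l$ becomes globally convex (sum of exponentials of affine functions plus a linear term), and read off the entire minimizer set from the kernel of the Hessian quadratic form $\sum_{i,j}y_{ij}^2e^{-\alpha_i-\beta_j}(u_i+v_j)^2$. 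This buys several things: it is more elementary (no Mountain Pass Theorem, no minor-by-minor determinant bookkeeping), it handles existence cleanly via coercivity transverse to the scaling line, it exposes the Sinkhorn/matrix-balancing structure of the likelihood equations, and it makes explicit the precise condition under which uniqueness holds --- connectivity of the bipartite graph of nonzero entries of $Y$. That last point is a real improvement in rigor: the stated hypothesis ``$Y$ full rank'' is not literally sufficient (e.g.\ $Y=I$ is full rank but its support decouples into $m$ blocks, and the kernel of your quadratic form is then $m$-dimensional); both your argument and the paper's actually rely on all entries of $Y$ being nonzero, which holds almost surely under the absolutely continuous sampling model, and you are the one who says so. The one step you state slightly tersely is the passage from ``the minimizer set is convex'' to ``it lies in a single coset of the kernel''; to close it, note that if $x_1,x_2$ are both minimizers then $l$ is constant on the segment joining them, so the second directional derivative along $x_2-x_1$ vanishes at $x_1$, placing $x_2-x_1$ in the kernel of the (positive semidefinite) Hessian there. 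With that sentence added the argument is complete.
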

A proof is given in the Appendix. 
To find the MLE under the null model,
we obtain the derivatives of the scaled log likelihood $l$ with
respect to $(\Sigma_{\row}, \Sigma_{\col}) \in \Theta_0$.
For notational convenience, we will refer to 
diagonal versions of $\Sigma_{\row}$ and $\Sigma_{\col}$
as $D_{\row}$ and $D_{\col}$ respectively. 
Setting these
derivatives equal to zero, we establish that the critical points of
$l$ must satisfy \begin{eqnarray}
D_{\row} & = & YD_{\col}^{-1}Y^{t}\circ I/m\label{eq:diag_flip}\\
D_{\col} & = & Y^{t}D_{\row}^{-1}Y\circ I/m\label{eq:diag_flip2}\end{eqnarray}
where {}``$\circ$'' is the Hadamard product. 
The MLE can be found by
iteratively solving equations \eqref{eq:diag_flip} and \eqref{eq:diag_flip2}.
This procedure can be seen as a type of block coordinate descent algorithm, 
decreasing $l$ at each iteration  \citep{tseng2001convergence}.

\subsection{Likelihood ratio test statistic and null distribution}\label{sec:lrts}
Since the scaled log likelihood is bounded below, we are able to obtain
a likelihood ratio statistic that is finite with probability 1 
when $Y$ is sampled from an absolutely continuous distribution on $\mathbb R^{m\times m}$. 
As usual, a likelihood ratio test statistic
can be obtained from the ratio of 
the unrestricted maximized likelihood to the likelihood maximized under the 
null.  We take our test statistic  to be 
\[ T(Y) =     l( \hat D_{\row}, \hat D_{\col} ; Y ) - 
l( \hat \Sigma_{\row}, \hat \Sigma_{\col} ; Y ),   \]
where $( \hat \Sigma_{\row}, \hat \Sigma_{\col})$ is any unrestricted MLE 
and $( \hat D_{\row}, \hat D_{\col} )$ is the MLE under $\Theta_0$.
Since the scaled log likelihood $l$ is minus two times the likelihood, 
our statistic is a monotonically increasing function of the likelihood 
ratio.

In  Theorem 1  we showed that 
$l( \hat{\Sigma}_{\row}, \hat{\Sigma}_{\col}; Y) = 
    m^2+ m\log|YY^t/m| $
for any unrestricted MLE  $(\hat{\Sigma}_{\row}, \hat{\Sigma}_{\col})$. 
Similarly, letting $(\hat{D}_{\row},\hat D_{\col})$ be an MLE under $H_0$, we have 
\begin{align*}
l( \hat{D}_{\row},\hat{D}_{\col};Y)&={\rm tr}\left[Y^t\hat{D}_{\row}^{-1}Y\hat{D}_{\col}^{-1}\right]+\log\left|\hat{D}_{\col}\otimes\hat{D}_{\row}\right|\\
&={\rm tr}\left[(Y^t\hat{D}_{\row}^{-1}Y)(Y^{t}\hat{D}_{\row}^{-1}Y/m\circ I)^{-1}\right]+\log\left|\hat{D}_{\col}\otimes\hat{D}_{\row}\right|\\
&=\sum_i (Y^t \hat{D}_{\row}^{-1}Y)_{ii}(Y^{t}\hat{D}_{\row}^{-1}Y/m\circ I)^{-1}_{ii}+\log\left|\hat{D}_{\col}\otimes\hat{D}_{\row}\right|\\
&=m\sum_i (Y^t \hat{D}_{\row}^{-1}Y)_{ii}/(Y^{t}\hat{D}_{\row}^{-1}Y)_{ii}+\log\left|\hat{D}_{\col}\otimes\hat{D}_{\row}\right|
=m^2 + \log\left|\hat{D}_{\col}\otimes\hat{D}_{\row}\right|,
\end{align*}
where the second equality stems from MLE satisfying $\hat{D}_{\col}=Y^{t}\hat{D}_{\row}^{-1}Y/m\circ I$ (Equation \eqref{eq:diag_flip2}). 
The third equality relies on the following identity for traces: 
for a diagonal matrix $A$ and unstructured matrix $B$ of the same dimension,
${\rm tr}\left[AB\right]=\sum_i A_{ii}B_{ii}$. The final line
is due to the following identity for Hadamard products: if $I$ is the identity matrix and $B$ 
is an unstructured matrix, then $(B\circ I)^{-1}_{ii}=1/B_{ii}$.

The maximized likelihoods under the null and alternative give
\begin{align}
T\left(Y\right)&= 
\log\left|\hat{D}_{\col}\otimes\hat{D}_{\row}\right|-m\log
  \left|YY^{t}/m\right| \nonumber \\
 &= m\left(\log\left|\hat{D}_{\col}\right|+\log\left|\hat{D}_{\row}\right|-\log\left|YY^{t}/m\right|\right).\label{eq:test_clean}
\end{align}

Since no closed form solution exists for $\hat{D}_{\row}$
it is not clear how to obtain the null distributions of $T$ in closed form. 
However,
it is possible to simulate from the null distribution of $T(Y)$, 
as the distribution of the test statistic is the same for all elements of the null hypothesis. 
To see this, we show that the test statistic itself is invariant under
left and right transformations of the data by positive diagonal matrices.
 Let $\tilde{Y}=D_{1}YD_{2}$ for positive diagonal
matrices $D_{1}$ and $D_{2}$. 
Since the MLE $\hat{D}_{\col}\otimes\hat{D}_{\row}$
is unique it is an equivariant function of any matrix $Y$ with respect
to left and right multiplication by diagonal matrices (see \citet{eaton1983multivariate}
Prop 7.11). In particular, writing $\hat \theta \left(Y\right)$ for the estimate
of $\hat{D}_{\col}\otimes\hat{D}_{\row}$ based on a data
matrix $Y$, we have \begin{eqnarray*}
\hat \theta\left(\tilde{Y}\right) & = & \left(D_{2}^{1/2}\otimes D_{1}^{1/2}\right)\hat \theta \left(Y\right)\left(D_{2}^{1/2}\otimes D_{1}^{1/2}\right).\end{eqnarray*}
Since the determinant is a multiplicative map, we can write the
determinant of the above as \begin{eqnarray*}
\left|\hat \theta\left(\tilde{Y}\right)\right| & = & \left|\left(D_{2}\otimes D_{1}\right)\right|\left|\hat \theta\left(Y\right)\right|.\end{eqnarray*}
Using the above, the $T\left(\tilde{Y}\right)$ can be written as
in Equation \eqref{eq:test_clean}: 
\begin{eqnarray*}
T\left(\tilde{Y}\right) & = & m \log\left|\hat \theta\left(\tilde{Y}\right)\right|-m\log\left|\tilde{Y}\tilde{Y}^{t}/m\right|\\
 & = &  m \log\left|\hat \theta\left(Y\right)\right|+ m\log\left|D_{2}\otimes D_{1}\right|
    -m\log\left|YY^{t}/m\right|- m \log\left|D_{2}\otimes D_{1}\right|=T\left(Y\right).\end{eqnarray*}
Since for a matrix normal random variable 
$Y\sim N_{m\times m}\left(0,\Sigma_1,\Sigma_2\right)$ we have 
$Y\overset{d}{=}\Sigma_{1}^{1/2}Y_{0}\Sigma_{2}^{1/2}$ for 
$Y_0\sim N_{m\times m}\left(0,I,I\right)$, 
the above argument implies that $T\left(Y\right)\overset{d}{=}T\left(Y_{0}\right)$
under the null. 
Therefore,  the null distribution of 
$T$ can be approximated via Monte Carlo simulation of 
$Y$ from any distribution in $H_0$.  For example, a Monte Carlo approximation 
to the $q^{\rm th}$ quantile, $T_q$ can be obtained from the following algorithm:
\begin{enumerate}
\item Simulate $Y_{0}^{1},\dots,Y_{0}^{S}  \sim  \text{i.i.d} \ N_{m\times m}\left(0,I,I\right)$; 
\item Let $\hat{T}_q=\min\{T(Y_0^q):\sum_{s=1}^S 1[T(Y_0^q)\geq T(Y_0^s)]/S\geq q\}$.
\end{enumerate}


\subsection{\label{sec:ellipt}Matrix variate elliptically contoured distributions}

The results of the previous subsection are immediately extendable
to the general class of matrix variate elliptically contoured distributions.
In this section we show that under minor regularity
conditions on the distributions, the likelihood for a matrix variate
elliptically contoured distribution is bounded when the matrix normal
distribution is bounded. We provide the form of an MLE for the general
class of mean zero square matrix variate elliptically contoured distributions
and demonstrate that the likelihood ratio test between $H_{0}$ and
$H_{1}$ has the same form as in Equation \eqref{eq:test_clean}. 

We use the notation of \citet{gupta1994new} for the matrix variate
elliptically contoured distribution. We say that $Y$ has a mean zero
square matrix variate elliptically countoured distribution and write
$Y\sim E_{m\times m}\left(0,\Sigma_{\row},\Sigma_{\col},h\right)$
if its density has the form
\begin{align}
f_{Y}\left(Y\right)  =  \frac{1}{\left|\Sigma_{\row}\right|^{\frac{m}{2}}\left|\Sigma_{\col}\right|^{\frac{m}{2}}}h\left({\rm tr}\left[Y^{t}\Sigma_{\row}^{-1}Y\Sigma_{\col}^{-1}\right]\right)
 =  \frac{1}{\left|\Sigma_{\row}\right|^{\frac{m}{2}}\left|\Sigma_{\col}\right|^{\frac{m}{2}}}h\left({\rm vec}\left(Y\right)^{t}\left(\Sigma_{\col}^{-1}\otimes\Sigma_{\row}^{-1}\right){\rm vec}\left(Y\right)\right).\label{eq:h_vec}\end{align}
For $h\left(w\right)=\left(2\pi\right)^{m^{2}/2}\exp\left(-\frac{1}{2}w\right)$,
$Y$ is a mean zero matrix variate normal variable. 
\citet{gupta1994new} show that
if $Y\sim E\left(0,\Sigma_{\row},\Sigma_{\col},h\right)$ then
$AYB\sim E\left(0,A\Sigma_{\row}A^{t},B\Sigma_{\col}B^{t},h\right)$
and if second moments exist, then ${\rm cov}\left({\rm vec}\left(Y\right)\right)=c_h\left(\Sigma_{\col}\otimes\Sigma_{\row}\right)$.
In \cite{gupta1995some}, the authors showed that when $\Sigma_{\row}$
and $h$ are known, the MLE of $\Sigma_{\col}$ is proportional
to the MLE of $\Sigma_{\col}$ under normality, but they
do not provide results for the boundedness of the likelihood or existence
of MLEs for the case where only $h$ is known. 
To find the form of the MLE in this case we state
a simplified version of Theorem 1 of \citet{anderson1986maximum}:
\newtheorem*{thmof}{Theorem 1 of \citet{anderson1986maximum}}
\newtheorem*{thmstr}{Theorem}
\begin{thmstr} Let $\Omega$ be a set
in the space of $\mathcal{S}_{+}^{m^{2}}$, such that if $V\in\Omega$
then $cV\in\Omega$ $\forall c>0$ (that is $\Omega$ is a cone).
Suppose $h$ is such that $h\left(y^{t}y\right)$ is a density in
$\mathbb{R}^{m^{2}}$ and $x^{m^{2}/2}h\left(x\right)$ has a finite
positive maximum $x_{h}$. Suppose that on the basis of an observation
$y$ from $\left|V\right|^{-1/2}h\left(y^{t}V^{-1}y\right)$ an MLE
under normality $\tilde{V}\in\Omega$ exists and $\tilde{V}>0$ with
probability 1. Then an MLE for $h$ is $\hat{V}=\left(m^{2}/x_{h}\right)\tilde{V}$
and the maximum of the likelihood is $\left|\hat{V}\right|^{-1/2}h\left(x_{h}\right)$. 
\end{thmstr}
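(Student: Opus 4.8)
The plan is to use the cone hypothesis on $\Omega$ to split the maximization of the likelihood $L(V)=|V|^{-1/2}h(y^{t}V^{-1}y)$ over $V\in\Omega$ into a one-dimensional optimization over the scale of $V$ and a scale-free optimization over its ``shape.'' Writing $q(V)=y^{t}V^{-1}y$ and $g(t)=t^{m^{2}/2}h(t)$, the first thing I would record is the algebraic identity
\[ L(V) \;=\; \bigl(|V|^{-1/2}q(V)^{-m^{2}/2}\bigr)\,g\bigl(q(V)\bigr) \;=\; S(V)\,g\bigl(q(V)\bigr), \]
where $S(V):=|V|^{-1/2}q(V)^{-m^{2}/2}$. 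The point of this rearrangement is that the factor $S$ is invariant under rescaling, $S(cV)=S(V)$ for all $c>0$, so it depends only on the ray $\{cV:c>0\}\subset\Omega$, while all dependence on the scale of $V$ and on $h$ is funneled through the single scalar $q(V)$ inside $g$.

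By hypothesis $g(t)=t^{m^{2}/2}h(t)$ attains a finite positive maximum at the point $x_{h}$, so $g(q(V))\le g(x_{h})$ with equality exactly when $q(V)=x_{h}$. Consequently $L(V)\le S(V)\,g(x_{h})$, and since $\Omega$ is a cone I can always slide along a ray to make $q(V)$ equal to $x_{h}$ without disturbing $S$. It follows that maximizing $L$ over $\Omega$ amounts to maximizing the ray functional $S$ and then choosing the scale so that $q(V)=x_{h}$.

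The crucial observation is that the shape functional $S$ does not involve $h$, so the same reduction applies verbatim to the Gaussian density $h_{N}(w)\propto\exp(-w/2)$. For the normal problem $g_{N}(t)\propto t^{m^{2}/2}e^{-t/2}$ is maximized at $t=m^{2}$, so the normal MLE $\tilde V$, which exists in $\Omega$ and is positive definite with probability one by hypothesis, is precisely a maximizer of $S$ scaled to the normal within-ray optimum; in particular $q(\tilde V)=y^{t}\tilde V^{-1}y=m^{2}$. I would then set $\hat V=(m^{2}/x_{h})\tilde V$, which lies in $\Omega$ because $\Omega$ is a cone, lies on the same ($S$-maximizing) ray as $\tilde V$, and satisfies $q(\hat V)=(x_{h}/m^{2})q(\tilde V)=x_{h}$. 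Hence $\hat V$ attains $g(q(\hat V))=g(x_{h})$ on the optimal ray and is therefore a global maximizer, with maximized value $L(\hat V)=|\hat V|^{-1/2}h(x_{h})$, as claimed.

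I expect the main obstacle to be the conceptual step rather than any computation: one has to recognize and justify that the likelihood factors into a scale-invariant shape functional $S$ times a pure function of the scalar $q(V)$, and that this shape functional is identical for $h$ and for the Gaussian, so that the (infinite-dimensional) hard part of the optimization over $\Omega$ is already solved by the given normal MLE. Once that factorization is in hand, attainment of the maximum is supplied directly by the hypothesis that $g$ has a finite positive maximum, and the existence of an $S$-maximizing ray is supplied by the almost-sure existence and positive-definiteness of $\tilde V$, leaving only the routine verification that $q(\tilde V)=m^{2}$ and the rescaling to $x_{h}$.
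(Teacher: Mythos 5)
Your proof is correct. Note that the paper itself gives no proof of this statement; it is quoted verbatim (in simplified form) from Theorem~1 of Anderson, Fang and Hsu (1986), and your scale/shape factorization $L(V)=S(V)\,g(q(V))$ with $S(cV)=S(V)$ --- reducing the problem to a scale-invariant optimization already solved by the normal MLE, plus a one-dimensional maximization of $t^{m^2/2}h(t)$ along the ray --- is precisely the argument of that original source, including the key observations that $q(\tilde V)=m^2$ for the Gaussian case and that the cone hypothesis lets you rescale to $q(\hat V)=x_h$ without leaving $\Omega$.
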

In the previous subsection we proved that for the mean zero square matrix
normal distribution, the likelihood is bounded for a single observation.
A direct application of the above theorem with $\Omega=\mathcal{S}_{+}^{m}\times\mathcal{S}_{+}^{m}\subset\mathcal{S}_{+}^{m^{2}}$
and $y={\rm vec}\left(Y\right)$ proves that for the likelihood of
a generic matrix variate elliptically contoured distribution with
$h$ defined as in \ref{eq:h_vec}, the likelihood is bounded 
and the MLE of ${\rm cov}({\rm vec}(Y))$ is proportional
to the MLE under normality.
Clearly the theorem hold for a smaller space $\omega=\mathcal{D}_{+}^{m}\times\mathcal{D}_{+}^{m}\subset\mathcal{S}_{+}^{m^{2}}$
as well, and thus the likelihood ratio statistic can be constructed as follows
\begin{align}
T\left(Y\right) & = 2\log\left[\left|\hat{V}_{\Omega}\right|^{-1/2}h\left(x_{h}\right)\right]-2\log\left[\left|\hat{V}_{\omega}\right|^{-1/2}h\left(x_{h}\right)\right]\nonumber \\
 & = \log\left|\hat{V}_{\omega}\right|-\log\left|\hat{V}_{\Omega}\right|
 =\log|\tilde{V}_\omega|-\log|\tilde{V}_\Omega|\label{eq:new_test}\end{align}
where $\tilde{V}_{\omega}$ and $\tilde{V}_{\Omega}$ are the MLEs
under normality that were previously derived.
Equation \eqref{eq:new_test} is identical to the original form of
the test (\emph{e.g.} Equation \eqref{eq:test_clean}). As such, to conduct the test
for any elliptically contoured distribtuion, we can
construct a reference distribution for the null based on a
mean zero matrix variate distribution. 

\section{\label{sec:Power-simulations}Power calculations}

In this section we present power calculations for three different types of 
covariance models. 
The three covariance models we consider are: (1) Exchangeable
row covariance and exchangeable column covariance; 
(2) Maximally sparse 
Kronecker structured covariance; and (3) the covariance
induced by a nonseparable stochastic blockmodel with
two row groups and two column groups.
For each covariance model, we consider the 
power as a function of parameters that control the
total correlation within a covariance matrix as well as in terms of $m$, 
the dimension of the matrix. 
In Table \ref{Flo:null_quant} we present the
95\% quantiles based on the null distributions required 
for performing level $\alpha=0.05$ tests. 


\begin{table}
\begin{center}
\begin{tabular}{rrrrrrrrr}
  \hline
Dimension $m$ & 5 & 10 & 15& 20 & 25 & 30 & 50 & 100 \\ 
  95\% quantile & 43.3 & 144.3 & 297.4 & 502.8 & 760.0 & 1064.6 & 2802.1 & 10668.4 \\ 
   \hline
\end{tabular}
\end{center}
\caption{\label{Flo:null_quant}95\% quantile of the null distribution of the test statistic 
for testing
$H_{0}$ versus $H_{1}$.
Approximation from 100,000 simulated values.}
\end{table}

\subsection{\label{sub:exchange}Exchangeable row and column covariance structure }
We first consider a submodel of the matrix normal model 
in which $\Sigma_{\row}$ and $\Sigma_{\col}$ have exchangeable 
covariance structure. In this structure, the correlation between
any two rows is a constant $\rho_{\row}$ and the correlation
between any two columns is a constant $\rho_{\col}$. 
Specifically, ${\rm cov}({\rm vec}(Y))=\Sigma_{\col}\otimes\Sigma_{\row}$ where
\begin{align*}
\Sigma_{\row}= \left(1-\rho_{{\rm r}}\right)I+\rho_{{\rm r}}{\bf 11}^{t}\ \text{and}\ 
\Sigma_{\col}=\left(1-\rho_{{\rm c}}\right)I+\rho_{{\rm c}}{\bf 11}^{t},
\end{align*}
and ${\bf 1}$ is a vector of ones of length $m$. 
We first consider a network with $m=10$ nodes 
and present the power as a function of $\rho_{\row}$ and $\rho_{\col}$
ranging from $-1/9$ to 1, where the lower bound guarantees
that the covariance matrices are positive definite.
We calculate the power on a $25\times 25$ grid in $[-1/9,1]^2$
and use a bivariate interpolation to construct the heatmap
in the top left panel of Figure \ref{flo:all}.
From the plot it is evident that the power is an increasing function
in $|\rho_{\row}|$ and $|\rho_{\col}|$. In particular, keeping $\rho_{\row}$ 
constant, the power is an increasing function of $|\rho_{\col}|$ and vice versa. 

In the top left panel of Figure \ref{flo:all} we observed that while keeping
$\rho_{\col}$ constant, the power is an increasing function of $|\rho_{\row}|$.
To study the power for higher dimensional matrices, we 
set $\rho_{\col}=0$ and vary $m$ and $\rho_{\row}$. The dashed line
in the top left panel of Figure \ref{flo:all} traces
the power function for $m=10$ and $\rho_{\col}=0$. This corresponds to the 
same style dashed line in the top right hand panel of Figure \ref{flo:all}.
The other four lines in the right hand panel represent the calculated power as a function
of $\rho_{\row}$ for different dimensions $m$, holding $\rho_{\col}=0$. 
As is expected, for each $m$, the power is an increasing function of $|\rho_{\row}|$.
Similarly, for each fixed $\rho_{\row}$ value, the power is an increasing function of $m$. 
This latter phenomenon is due to the increase in the amount of data information with the increase
in the dimension of the sociomatrix.


\begin{figure}[!ht]
\centering
\includegraphics[scale=.8]{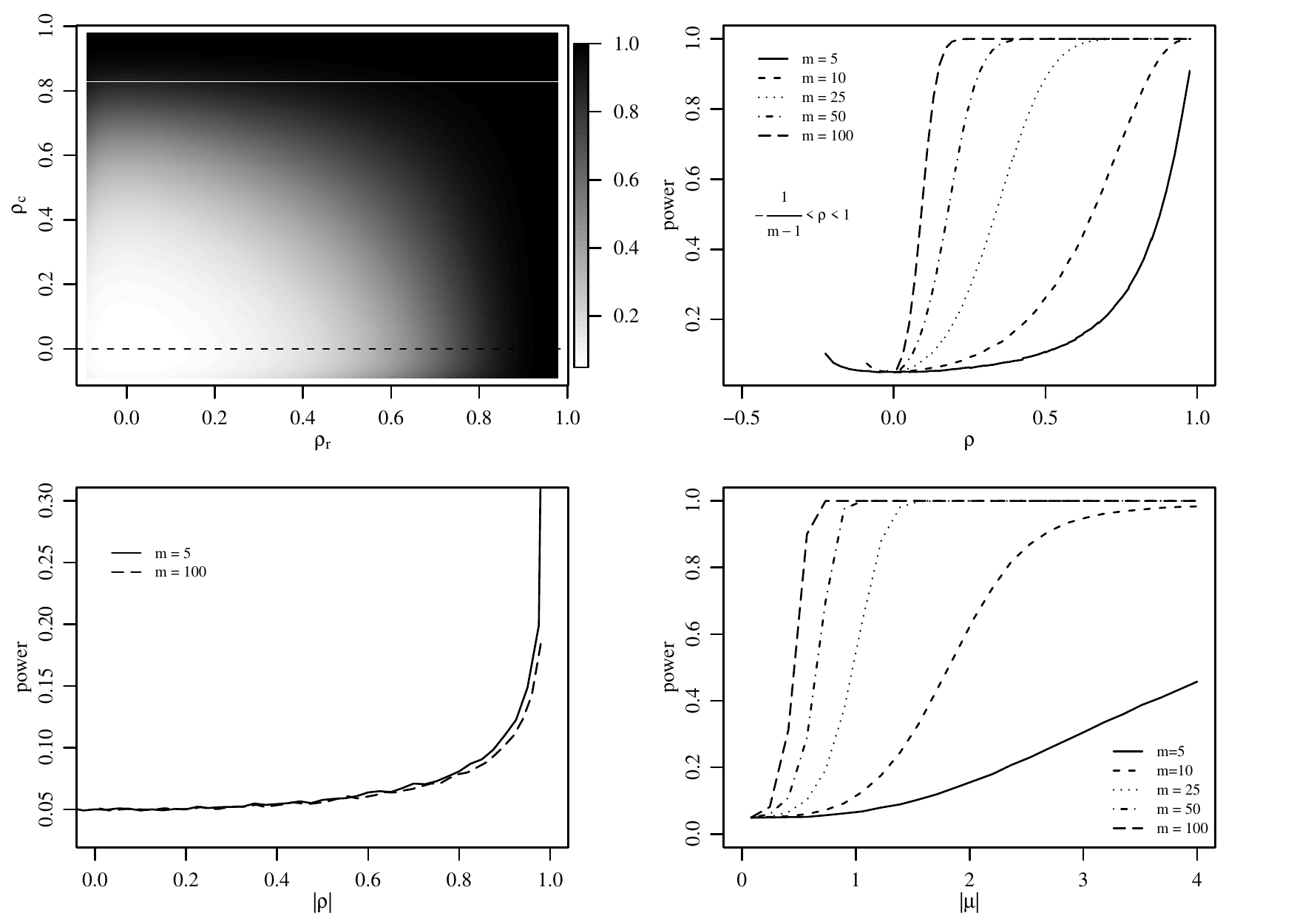}
\caption{
The top row of panels displays the power of the test under
the exchangeable covariance model of Section \ref{sub:exchange}.
The bottom left panel displays the power of the test for the
maximally sparse covariance model of Section \ref{sub:small} and
the bottom right panel displays the power of the test for the 
nonseparable stochastic blockmodel of Section \ref{sub:block}.
\label{flo:all}}
\end{figure}

\subsection{\label{sub:small}{Maximally sparse Kronecker covariance structured correlation}}

While the previous example demonstrates the power of the test in the
presence of many nonzero off-diagonal entries in the correlation matrices,
it is of interest to see if the test has any power against alternatives
that do not exhibit a large amount of correlation. For this purpose
we consider a maximally sparse Kronecker covariance structure.
We set the columns to be independent
and only the first two rows to be correlated. This can be written compactly as
 $\Sigma_{\col}=I$ and $\Sigma_{\row}=I+\rho E_{12}+\rho E_{21}$,
where $E_{ij}$ is the 0 matrix with a 1 in the $\left(i,j\right)^{\rm th}$ entry. For each
of the matrix sizes $m\in\left\{ 5,10,25,50,100\right\} $ we computed power functions
for values of $\rho$ ranging between -1 and 1.
Monte Carlo approximations to the corresponding power functions
 are presented in the bottom left panel of Figure \ref{flo:all}. We plot
the results for $m=5$ and $m=100$ and see that the power increases
monotonically as a function of $\left|\rho\right|$ for both dimensions. 
While the power of the test for a fixed $\rho$ appears to decrease
as the size of the network $m$ increases, the two curves are
nearly identical. 
We explain this as follows: while one expects that as the dimension
$m$ increases there is an increase in data information for identifying
the correlation $\rho$, the power curve is influenced more heavily
by the fact that the difference between $\Sigma_{\row}$ and the identity
matrix becomes less pronounced.
Additional power curves for a range of $m$ values 
between 5 and 100 were approximated. 
All the curves were between the $m=5$ and $m=100$
curves that are presented in the plot.
For all the power calculations, the lowest calculated power
 for values of $\rho$ close to 0 was 
always within two Monte Carlo standard errors of 0.05.

\subsection{\label{sub:block}Misspecified covariance structure}

In the Introduction we discussed a popular model
for relational data with an underlying assumption of stochastically
equivalent nodes called the stochastic blockmodel. 
Straightforward calculations show
that in general the covariance
 induced by a stochastic blockmodel is 
nonseparable, but still induces correlations
among the rows and among the columns. As such, 
we are interested in evaluating the power
of our test against such nonseparable
alternatives.

A stochastic blockmodel can be represented in terms of 
multiplicative latent variables. Specifically, we can write the relationship
$y_{ij}=u_i^t W v_j+\epsilon_{ij}$ where $u_i$ and $v_j$ are 
latent vectors representing the row group membership of node $i$ 
and the column group membership of node $j$. $W$ is a matrix of means 
for the different group memberships and $\epsilon_{ij}$ is iid random noise.
For the purposes of this power calculation we consider a simple
setup where each node belongs to one of two row groups and 
one of two column groups with equal probability. 
We let 
$W=\bigl(\begin{smallmatrix} 0&-\mu \\\mu&0\end{smallmatrix} \bigr)$
depend on a single parameter $\mu>0$. Under this choice
of $W$, we have $E[Y]=0$ and 
$E[{\bf 1}^t Y {\bf 1}]=0$. 
Since there are only two groups, the latent group membership
vectors can be written as $u_i=(u_{i1},1-u_{i1})$ and 
$v_j=(v_{j1},1-v_{j1})$ where $u_{i1}$ and $v_{j1}$ are independent 
Bernoulli($1/2$) random variables.

The bottom right panel of Figure \ref{flo:all} presents the power calculations
for dimensions $m\in\{5,10,25,50,100\}$ and $|\mu|\in[0,4]$. The power 
of the level $\alpha=0.05$ test is increasing in $|\mu|$ which is a desirable property
for this blockmodel since
as $|\mu|$ grows
the difference in the means for the groups becomes greater. 
The power of the test also increases with the 
dimension $m$.

\section{\label{sec:Extensions-and-data}Extensions and Applications}

In this section we develop several extensions of the proposed test, 
and illustrate their use in the context of two data analysis examples.
In the first example, we show how the 
test can be extended to accommodate
a missing diagonal, an unknown non-zero mean,
and heteroscedastic replications. 
The second example illustrates the use of the test for 
binary network data, a common type of relational data.

%

\subsection{Extensions and continuous data example}

International trade data on the value of exports from
country to country is collected by the UN on a yearly basis and disseminated
through the UN Comtrade website: \url{http://comtrade.un.org}.
In this section we consider measures of total exports between
 twenty-six mostly large,
well developed countries with high gross domestic product collected
from 1996 to 2009 (measured in 2009 dollars). Specifically,
we are interested in evaluating evidence for correlations
among exporters and among importers.
As trade between countries
is relatively stable across years, we analyze the yearly change in
log trade values, resulting in thirteen measurements 
(for the fourteen years of data)
for every country-country
pair. The data takes the form of a three way array 
$Y=\left\{ Y_{ijk}:i,j\in\{1,\dots,26\},k\in\{1,\dots,13\}\right\} $
where $i$ and $j$ index the exporting and
importing countries respectively and $k$
indexes the year. Exports from a country to 
itself are not defined, and so entries $Y_{iik}$ are ``missing''.

We consider a model for trade of the form,
\begin{align}
Y_{ijk}= \beta_1 x_{ik} + \beta_2 x_{jk}+ \epsilon_{ijk},\label{eq:gravity2}
\end{align}
where $x_{ik}$ is the difference in log gross
domestic product of country $i$ between years
$k$ and $k-1$ (theoretical development of this model is
available in the economics literature, see \cite{tinbergen1962shaping}, and \cite{bergstrand1985gravity,bergstrand1989generalized}). 
We use GDP data 
collected by the
World Bank through \url{http://data.worldbank.org/}
to obtain OLS estimates of $\beta_1$ and $\beta_2$.
To investigate the correlations among importers and among exporters
we collect the residuals
$e_{ijk}=Y_{ijk}-\hat{Y}_{ijk}$ into thirteen matrices, $E_{\cdot\cdot k}$
for $k=1,\dots,13$.
Figure \ref{Flo:data_eigen} plots the first two eigenvectors
of moment estimates of pairwise row and column correlation matrices based
on $E_{\cdot\cdot1},\dots,E_{\cdot\cdot\left(13\right)}$.
We observe systematic geographic patterns in both panels of the figure 
suggesting evidence that the $\epsilon_{ijk}$ are not independent.
To evaluate this evidence formally by testing 
 for dependence of the $\epsilon_{ijk}$ we
  extend the conditions under which the test developed in this article
  is applicable.
Specifically, we must accommodate the following features of
this data:
 a missing diagonal ($Y_{iik}$ is not defined
for all $i$ and $k$), multiple observations (13 data points),
and a nonzero mean structure (of the form \eqref{eq:gravity2}).
\begin{figure}
\begin{center}\includegraphics[scale=.9]{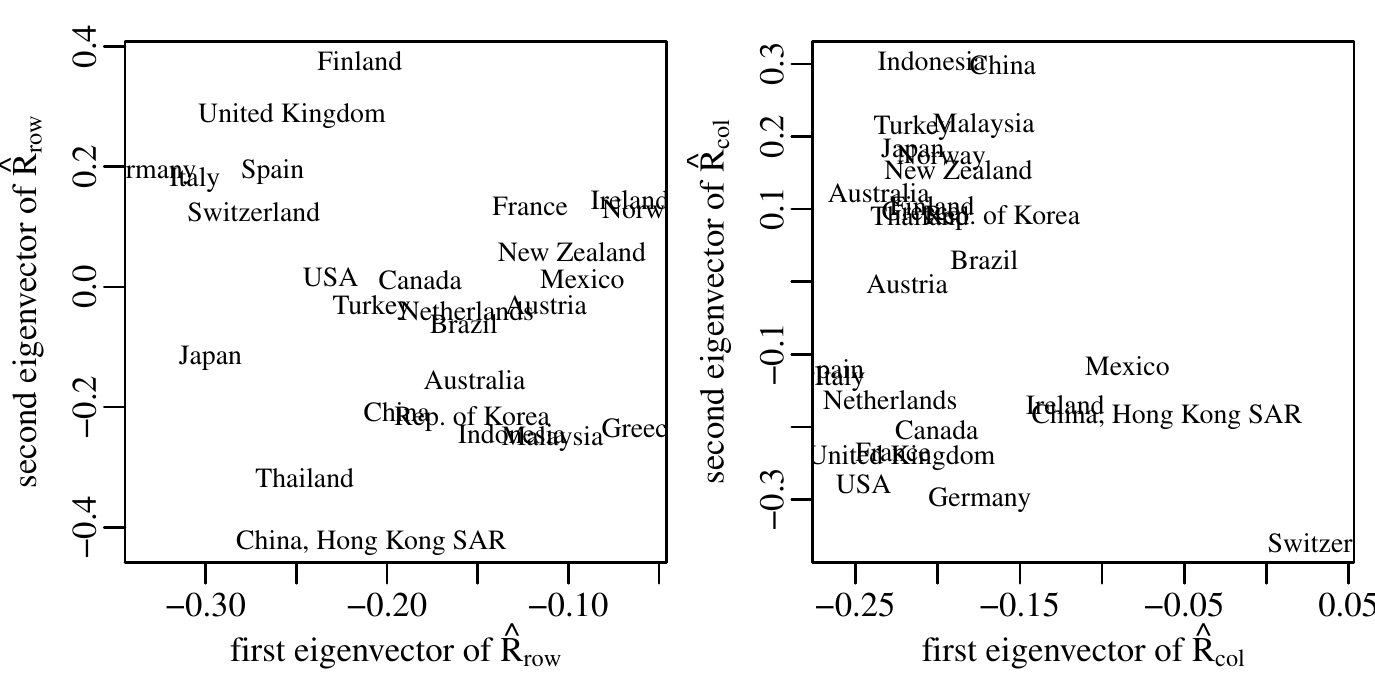}\end{center}

\caption{\label{Flo:data_eigen}Plots of the first two eigenvectors of the
estimates $\hat{R}_{{\rm row}}$ and $\hat{R}_{{\rm col}}$ of the
row and column correlation matrices. Proximity of countries in the eigenspace
indicates a positive correlation.}

\end{figure}

\paragraph{\label{sub:Missing-diagonal}Missing diagonal:}

In relational datasets, 
the relationship of an actor to himself is typically undefined,
meaning that 
the relational matrix 
$Y$ has an undefined diagonal. 
It is common to
 treat the entries of an undefined diagonal as missing at random
and to use a data augmentation procedure to recover a complete data
matrix, applying the analysis to the complete data. In the context
of this article,  this approach would allow us to treat the whole
data matrix as a draw from a matrix normal distribution and perform
our test exactly as outlined in Section \ref{sec:Likelihood-ratio-test}.
In this section we describe an augmentation procedure
that does not require distributional assumptions for the diagonal
elements. The procedure
produces a data matrix $\tilde{Y}$ that we use to 
calculate the test statistic $T(\tilde{Y})$. 
Specifically, $\tilde{Y}$ replaces the undefined diagonal of $Y$ with zeros,
keeping the rest of the data matrix the same. 
We show that $T(\tilde{Y})$ is invariant under
diagonal transformations and thus we can approximate the
null distribution of the test statistic based on for data drawn from
a matrix normal distribution where the diagonal entries are replaced
with zeros. 

Consider square matrices $Y$ and $\tilde{Y}$ where $Y\sim N_{m\times m}\left(0,D_{\row},D_{\col}\right)$
while $\tilde{Y}$ is distributed identically to $Y$ 
except the diagonal entries are replaced with zeros.
Similarly define square matrices $Y_{0}\sim N_{m\times m}\left(0,I,I\right)$
and $\tilde{Y}_{0}$. We showed in Section \ref{sec:Likelihood-ratio-test} that
the distribution
of the likelihood ratio test statistic is invariant under transformations
by diagonal matrices on the left and right, 
that is $T(Y)\overset{{\rm d}}{=}T(Y_{0})$. 
We now show that $T(\tilde{Y})\overset{{\rm d}}{=}T(\tilde{Y}_{0})$.
It is immediate that since $Y\overset{{\rm d}}{=}D_{\row}^{1/2}Y_{0}D_{\col}^{1/2}$
we have $\tilde{Y}\overset{{\rm d}}{=}D_{\row}^{1/2}\tilde{Y}_{0}D_{\col}^{1/2}$,
as zeros on the diagonal are preserved by left and right diagonal
transformations and the off diagonal entries $\left(i,j\right)$ are
normally distributed with variance $D_{\row,i}D_{\col,j}$.
As in Section \ref{sec:lrts}, we appeal to the equivariance of a unique MLE to show that 
$T(\tilde{Y})=T(\tilde{Y}_{0})$. 
The argument is identical to the 
one appearing in the paragraph following Equation \eqref{eq:test_clean} 
on page \pageref{eq:test_clean}
and so we do not reproduce it here.
Since $T(\tilde{Y})\overset{{\rm d}}{=}T(\tilde{Y}_{0})$,
we can approximate the null distribution and calculate the relevant quantiles
for the test statistic with a simple update to the algorithm at the end of Section \ref{sec:lrts}: 
\begin{enumerate}
\item Simulate  $\tilde Y_{0}^{1},\dots,\tilde Y_{0}^{S}\overset{\rm iid}{\sim}\mathcal{L}\left(\tilde{Y}_{0}\right)$, where $\mathcal{L}(\tilde{Y}_{0})$ denotes the distribution
of $\tilde{Y}_{0}$;
\item Let $\hat{T}_q=\min\{T(\tilde Y_0^q):\sum_{s=1}^S 1[T(\tilde Y_0^q)\geq T(\tilde Y_0^s)]/S\geq q\}$.
\end{enumerate}

\paragraph{\label{sub:3-way-array}Repeated observations:}

The test we discussed in this article is designed for a single observation.
However, the test conveniently generalizes to the situation
in which multiple observations are available.
We will consider two types of additional
observations: independent homoscedastic observations and independent
heteroscedastic observations. First, if there are $p$ independent
identically distributed observations, we note that likelihood equations
of Section 2 can be rewritten as \begin{eqnarray*}
mp\hat{D}_{\row}=\sum Y_{i}\hat{D}_{\col}^{-1}Y_{i}^{t}\circ I &  & mp\hat{D}_{\col}=\sum Y_{i}^{t}\hat{D}_{\row}^{-1}Y_{i}\circ I\\
mp\hat{\Sigma}_{\row}=\sum Y_{i}\hat{\Sigma}_{\col}^{-1}Y_{i}^{t} &  & mp\hat{\Sigma}_{\col}=\sum Y_{i}^{t}\hat{\Sigma}_{\col}^{-1}Y_{i}.\end{eqnarray*}
The likelihood remains bounded and the form of the test statistic
is identical to Equation \eqref{eq:test_clean}. When the observations are heteroscedastic 
the likelihood equations
(included in the proof of Theorem \ref{thm:3wayext} in the Appendix)
are more complicated because of the need to estimate the variability
along the replications (we refer the reader to \citet{hoff2011separable}
for an exposition on the general class of array normal distributions
and estimation procedures). 
\begin{thm}
\label{thm:3wayext} Let $Y_{1},\dots,Y_{p}$ be independent random
matrices distributed as $Y_{i}\sim N_{m\times m}\left(0,d_{i}\Sigma_{\row},\Sigma_{\col}\right)$.
Then $\forall p\geq1$ 
the likelihood is bounded as a function of
the covariance matrices $\Sigma_{\row}$
and $\Sigma_{\col}$ and the variance parameters $d_{1},\dots,d_{p}$. 
\end{thm}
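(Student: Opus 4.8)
The plan is to generalize the argument of Theorem~\ref{thm:alt_not_unique} by first \emph{profiling out} the heteroscedasticity parameters $d_1,\dots,d_p$ analytically, and then bounding the resulting profile objective in $(\Sigma_{\row},\Sigma_{\col})$ with the same arithmetic--geometric mean inequality that drove the single-observation case. First I would write down the scaled log likelihood for the full sample. Since $|\Sigma_{\col}\otimes(d_i\Sigma_{\row})|=d_i^{m^2}|\Sigma_{\col}|^{m}|\Sigma_{\row}|^{m}$ and ${\rm tr}[(d_i\Sigma_{\row})^{-1}Y_i\Sigma_{\col}^{-1}Y_i^{t}]=a_i/d_i$ with $a_i:={\rm tr}[\Sigma_{\row}^{-1}Y_i\Sigma_{\col}^{-1}Y_i^{t}]$, summing the per-observation contributions gives
\[
l=\sum_{i=1}^{p}\left(\frac{a_i}{d_i}+m^{2}\log d_i\right)+pm\log|\Sigma_{\col}|+pm\log|\Sigma_{\row}|.
\]
As in Theorem~\ref{thm:alt_not_unique}, I would assume each $Y_i$ is full rank, which holds with probability one under an absolutely continuous distribution; then $a_i=\|\Sigma_{\row}^{-1/2}Y_i\Sigma_{\col}^{-1/2}\|_F^{2}>0$ for every positive definite pair.

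Next, for fixed $(\Sigma_{\row},\Sigma_{\col})$ I would minimize over each $d_i$ separately. The map $d_i\mapsto a_i/d_i+m^{2}\log d_i$ on $(0,\infty)$ diverges to $+\infty$ at both endpoints and has a unique stationary point at $d_i=a_i/m^{2}$, which is therefore its global minimizer; substituting yields the profile objective
\[
l^{*}(\Sigma_{\row},\Sigma_{\col})=pm^{2}(1-\log m^{2})+m^{2}\sum_{i=1}^{p}\left[\log a_i+\tfrac{1}{m}\log|\Sigma_{\col}|+\tfrac{1}{m}\log|\Sigma_{\row}|\right].
\]
Since $l\ge l^{*}$ pointwise over the whole enlarged parameter space, it suffices to bound $l^{*}$ from below.

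The decisive step is the determinant cancellation. For each $i$, let $\lambda_{i1},\dots,\lambda_{im}$ be the eigenvalues of the positive definite matrix $M_i:=\Sigma_{\row}^{-1/2}Y_i\Sigma_{\col}^{-1}Y_i^{t}\Sigma_{\row}^{-1/2}$, so that $a_i=\sum_j\lambda_{ij}$ and $\prod_j\lambda_{ij}=|M_i|=|Y_i|^{2}/(|\Sigma_{\row}||\Sigma_{\col}|)$. Applying the arithmetic--geometric mean inequality to the $\lambda_{ij}$ gives
\[
\log a_i+\tfrac{1}{m}\log|\Sigma_{\col}|+\tfrac{1}{m}\log|\Sigma_{\row}|\ \ge\ \log m+\tfrac{2}{m}\log|Y_i|,
\]
where the $\log|\Sigma_{\row}|$ and $\log|\Sigma_{\col}|$ terms cancel exactly. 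Summing over $i$ bounds $l^{*}$, and hence $l$, below by a finite constant depending only on the data, which proves the claim.

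I expect the main obstacle to be conceptual rather than computational: the parameters are non-identifiable (the rescaling $\Sigma_{\row}\mapsto c\Sigma_{\row}$, $d_i\mapsto d_i/c$ leaves the model invariant), so the infimum of $l$ is approached along many rays, including $d_i\to 0$, $|\Sigma_{\row}|\to 0$, and a common scale diverging. The profiling step disposes of the $d_i$ degeneracies once and for all, and the exact cancellation of the determinant terms after the arithmetic--geometric mean inequality is precisely what guarantees that the remaining degeneracies in $(\Sigma_{\row},\Sigma_{\col})$ cannot send $l^{*}$ to $-\infty$. Verifying that this cancellation survives the extra $\log a_i$ and $\log d_i$ bookkeeping introduced by heteroscedasticity is the delicate point of the argument.
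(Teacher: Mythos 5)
Your proof is correct, and after the shared first step it takes a genuinely different route from the paper's. Both arguments begin by profiling out the heteroscedasticity parameters in the same way: the paper also observes that $l$ is convex in $D^{-1}$ and substitutes $d_i={\rm tr}(Y_i\Sigma_{\col}^{-1}Y_i^{t}\Sigma_{\row}^{-1})/m^{2}$ to obtain a profile objective identical (up to constants) to your $l^{*}$. The divergence is in how the remaining minimization over $(\Sigma_{\row},\Sigma_{\col})$ is handled. The paper removes the scale nonidentifiabilities by restricting to pairs of positive definite matrices with largest eigenvalue one, and then argues by continuity and coercivity: the profile likelihood attains a minimum on this normalized set because the $\log{\rm tr}(\cdot)$ terms stay finite while the log-determinant terms blow up as eigenvalues approach zero. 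You instead prove a global, explicit lower bound directly on the unrestricted parameter space by applying the arithmetic--geometric mean inequality to the eigenvalues of $\Sigma_{\row}^{-1/2}Y_i\Sigma_{\col}^{-1}Y_i^{t}\Sigma_{\row}^{-1/2}$, so that the $\log|\Sigma_{\row}|$ and $\log|\Sigma_{\col}|$ terms cancel exactly; this is the same mechanism as in Theorem~\ref{thm:alt_not_unique}, and indeed your bound $pm^{2}(1-\log m)+2m\sum_i\log|\det Y_i|$ reduces to the Theorem~\ref{thm:alt_not_unique} bound when $p=1$. Your approach buys a sharper and more self-contained conclusion: it needs no identifiability normalization, no compactness or boundary-limit analysis (the paper's treatment of the boundary, in particular the claim about eigenvectors of $\Sigma_{\row}^{-1}$ not matching those of $Y_i$, is somewhat informal), and it produces an attainability-checkable constant rather than mere existence of a finite infimum. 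The paper's coercivity argument, on the other hand, yields as a by-product the existence of a minimizer in the normalized parameterization, which your pure lower-bound argument does not address; since the theorem only asserts boundedness, that is not a gap. The one hypothesis you should state is that each $Y_i$ is nonsingular (almost surely true), which you do.
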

A proof is in the Appendix. Theorem~\ref{thm:3wayext} extends the literature
on maximum likelihood estimation for proportional covariance models 
from natural exponential families to the matrix normal family which is
a curved exponential family
 \citep{eriksen1987proportionality,flury1986proportionality,jensen1987estimation,jensen2004estimation}.
Due to Theorem \ref{thm:3wayext}, we
can modify the test statistic to test 
 $H_{0}^{\prime}:D_{{\rm obs}}\in\mathcal{D}_{+}^{p},\ D_{\col},D_{\row}\in\mathcal{D}_{+}^{m}$
vs $H_{1}^{\prime}:D_{{\rm obs}}\in\mathcal{D}_{+}^{p},\ \Sigma_{\col},\Sigma_{\row}\in\mathcal{S}_{+}^{m}$:\begin{align*}
T\left(Y_{1},\dots,Y_{p}\right) & = l(\hat{D}_{\rm obs}^{\rm null},\hat{D}_{\row},\hat{D}_{\col};Y)-l(\hat{D}_{\rm obs}^{\rm alt},\hat{\Sigma}_{\row},\hat{\Sigma}_{\col};Y)\\
&= \log\left|\hat{D}_{{\rm obs}}^{{\rm null}}\otimes\hat{D}_{\col}\otimes\hat{D}_{\row}\right|-\log\left|\hat{D}_{{\rm obs}}^{{\rm alt}}\otimes\hat{\Sigma}_{\col}\otimes\hat{\Sigma}_{\row}\right|.\end{align*}
We can again approximate the null distribution of the test statistic
due to the invariance of the test statistic $T\left(Y_{1},\dots,Y_{p}\right)$
under diagonal transformations of the data along all three modes.
The results on missing diagonal elements (above) and a non-zero mean (below)
are also immediately applicable to $T\left(Y_{1},\dots,Y_{p}\right)$
above.

\paragraph{Relaxing the mean zero assumption: \label{sub:Relaxing-the-mean}}

The reference distributions for the test statistics developed in Section
\ref{sec:Likelihood-ratio-test} are based on the assumption that
${\rm E}[Y]=0$, a strong assumption that is unlikely
to be true for any observed dataset. While treating the mean as a
nuisance parameter is tempting, the likelihood function under the
alternative model is unbounded when estimating a mean matrix and two
covariance matrices simultaneously. We propose to first fit
a regression based mean to the data assuming the entries in the data matrix are independently
distributed with the same variance parameter and to then perform the test based on
the demeaned data. We consider the following regression framework for the mean:
$y_{ij} =  \beta^t x_{ij}+\epsilon_{ij}$.
The regressor $x_{ij}$ is a $p$-dimensional vector that can
include features of node $i$, features of node $j$ and dyadic
features for nodes $i$ and $j$. The $\epsilon_{ij}$ are assumed to
be independent and identically distributed errors. Writing this in
vector notation as ${\rm vec}\left(Y\right)=X\beta+{\rm vec}\left(\epsilon\right)$
where $X=\left(x_{12}^{t}\cdots x_{\left(m-1\right)m}^{t}\right)^{t}$ and $\epsilon$
is an $m\times m$ matrix,
the OLS estimate of $\beta$ is  
$\hat{\beta}  = \left(X^{t}X\right)^{-1}X^{t}{\rm vec}\left(Y\right)$.
Under mild regularity conditions on the distribution of the explanatory
variables $X$ and the row and column variances for new nodes, the
OLS estimate $\hat{\beta}$ is a consistent estimate of $\beta$.
This motivates us to base the test statistic on 
${\rm vec}\left(\hat{\epsilon}\right)={\rm vec}\left(Y\right)-X\hat{\beta}$,
the residuals of the regression, as we expect the distribution of ${\rm vec}\left(\hat{\epsilon}\right)$ 
to be close to the distribution of ${\rm vec}\left(\epsilon\right)$ for large $m$.
The null distribution of the test statistic $T$ based on the residuals
from the regression is not identical to the one derived in Section
\ref{sec:Likelihood-ratio-test} and no explicit computation of the
new null distribution is readily available. However, we have observed
via simulation that the level of the test based on the estimated residuals
$\hat{\epsilon}$ appears to be asymptotically correct and that the test statistic
based on $\epsilon$ and $\hat{\epsilon}$ appear to have the same limiting distributions.
\paragraph{\label{sec:Trade-data-example}Application to international trade
data:}

Figure \ref{Flo:data_eigen} suggested that there is evidence of residual
dependence among exporters and among importers 
based on
additional information about the data (the relative geographic positions of the countries). 
Above we developed the tools to 
test for independence of $\epsilon_{ijk}$ in \eqref{eq:gravity2}
using the likelihood ratio test proposed in Section \ref{sec:Likelihood-ratio-test}.
Formally, we are testing the
null hypothesis $H_{0}^{\prime}:D_{{\rm time}}\in\mathcal{D}_{+}^{13},\ D_{\rm imp},D_{\rm exp}\in\mathcal{D}_{+}^{26}$
versus the alternative hypothesis $H_{1}^{\prime}:D_{{\rm time}}\in\mathcal{D}_{+}^{13},\ \Sigma_{\rm imp},\Sigma_{\rm exp}\in\mathcal{S}_{+}^{26}$. 
The approximate 95\% quantile of the distribution of the
test statistic when the data are missing diagonal entries
under the null is $729.8$.
Setting $E_{iik}=0$ for all $i$ and $k$,
the test statistic for the data is $T(E_{\cdot\cdot1},\dots,E_{\cdot\cdot13})=3354$.
This value is much greater than the 95\% quantile confirming
that we should reject the independence of the $\epsilon_{ijk}$. 
It is thus inappropriate
to assume that the exporters and importers are independent.

\subsection{\label{sub:data_pvalues}Application to binary protein-protein interaction network}

So far we have developed a testing procedure for 
the presence of row and column
correlations in relational matrices
within the framework of matrix normal and general matrix variate
elliptically contoured distributions. 
In this section we propose a methodology
that allows us to evaluate the presence of row and column
correlations for binary relational data,
where the observed
network is represented by a sociomatrix matrix $A$ where
$a_{ij}$ describes the relationship
from node $i$ to node $j$. When the entries of $a_{ij}$ are binary 
indicators of a relationship from $i$ to $j$, 
the matrix $A$ can be viewed
as the adjacency matrix of a directed graph.  
In this example we use the protein-protein interaction 
data of \cite{butland2005interaction},
which consists of a record of 
interactions between 
$m=270$ essential proteins of {\emph{E. coli}}.
The data are organized into a $270\times270$ binary matrix $A$, where
$a_{ij}=1$ if protein $j$ binds to protein $i$ and $a_{ij}=0$ otherwise. The network
has one large connected component with 234 nodes as shown in the left hand side of 
Figure \ref{Flo:post_p_val}.
In this case diagonal elements of the matrix $A$ are meaningful
 since proteins may bind to themselves.

A popular class of models for the analysis of 
such data is based on representing the relations $a_{ij}$
as functions of latent normal random variables \citep{hoff_2005,hoff_2008_nips}.
For the protein-protein interaction data
 we propose to use an asymmetric version of the eigenmodel
of \cite{hoff_2008_nips},  a type of reduced rank
 latent variable model where the relationship
between nodes $i$ and $j$ is characterized by multiplicative latent sender and receiver
effects. The model can be written as:
\begin{align*}
a_{ij}&=1[y_{ij}>\gamma], &
y_{ij}&=u_i^t  v_j + \epsilon_{ij}, &
Y&=UV^t +E,
\end{align*}
where $\epsilon_{ij}\overset{\rm iid}{\sim}N(0,1)$ and $u_i,v_j\in\mathbb{R}^R$ for $R<270$.
Considering $Y$ as a matrix variate normal
variable it is immediate that $E[YY^t]=U(V^tV)U^t+I$ and
$E[Y^tY]=V(U^tU)V^t+I$ and so the heterogeneity in 
$U$ describes the row covariance $\Sigma_{\row}$
while the heterogeneity in $V$ describes the column covariance
$\Sigma_{\col}$.

We propose using the test developed in this article to evaluate
how well models of rank $R$ capture the dependence in the data.
Specifically,
we fit the above model for multiple values of $R$,
and for each value we approximate the posterior 
distribution of the test statistic. If the rank-$R$ model
is sufficient
for capturing the row and column correlations found in 
the data, we do not expect to have evidence
to reject the null of independence.
\cite{hoff_2008_nips} outlines
a Markov chain Monte Carlo algorithm for fitting the above model.
Following the procedure described in \cite{thompson2007fuzzy} we 
apply the 
testing procedure to 
draws from the posterior distribution 
of $Y-UV^t$
constructed via MCMC and for each
test statistic we calculate a $p$-value. These $p$-values are termed ``fuzzy $p$-values''
as their distribution provides a description of the uncertainty about the 
$p$-value that results from not observing $Y$, $U$ and $V$. 

\begin{figure}[ht]
\centering
\includegraphics[scale=.65]{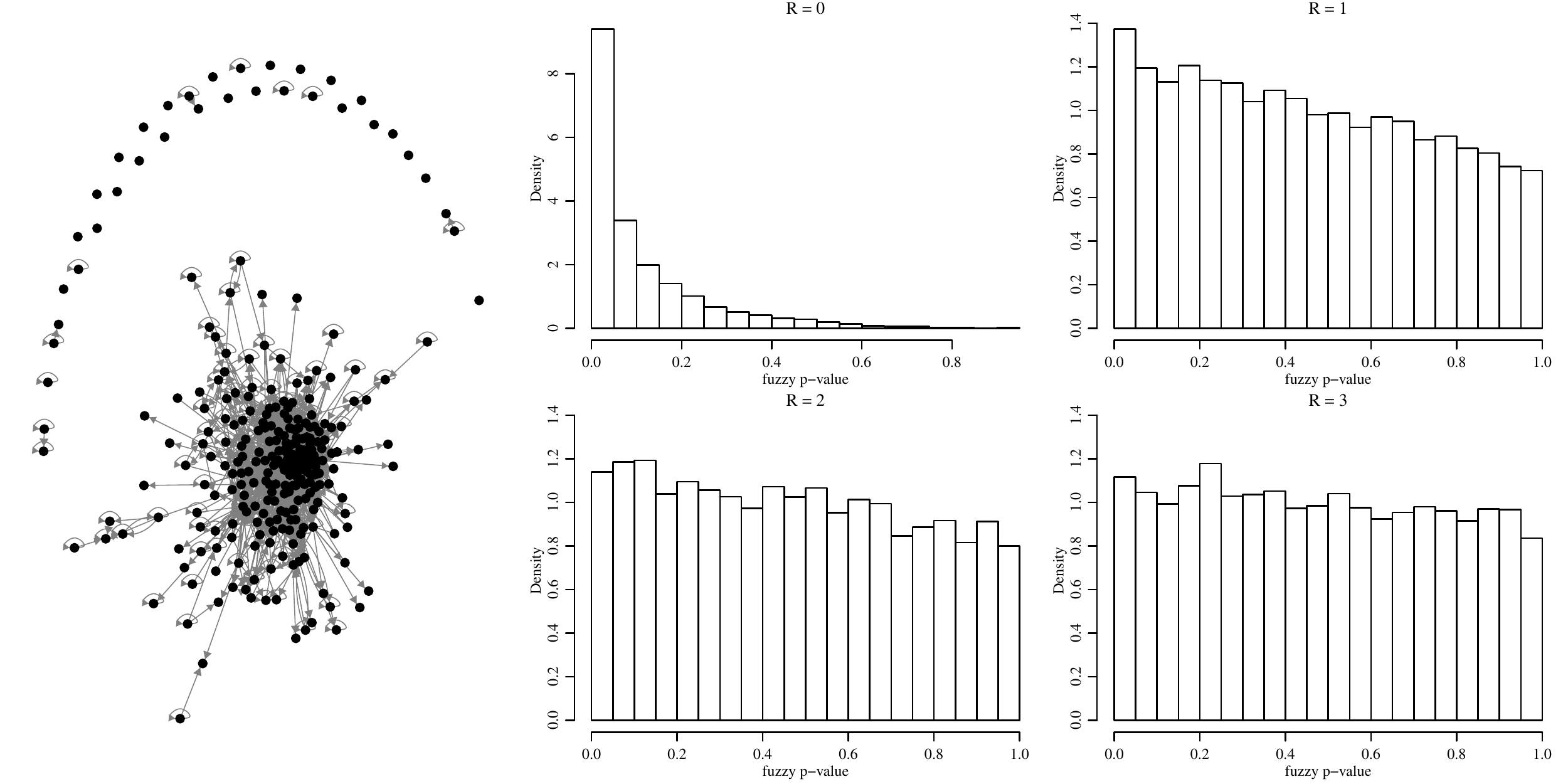}
\caption{Protein-protein interaction data and histograms of fuzzy $p$-values for models of ranks $R\in\{0,1,2,3\}$. The bins have a width of $0.05$.
}\label{Flo:post_p_val}
\end{figure}

As this is a very sparse network (the interaction rate is $\bar{A}=0.03$),
we expect a low rank approximation to be appropriate. In fact, analysis 
using cross validation of a symmetrized
version of this data identified $R=3$
to be an appropriate rank in \cite{hoff_2008_nips}. In the right hand
side of Figure \ref{Flo:post_p_val} we present the distributions of the fuzzy $p$-values 
for $R\in\{0,1,2,3\}$. 
A visual inspection of the the fuzzy $p$-values in the
four panels of the figure provides
evidence  
about the rank of the latent factors. 
For example, under the $R=0$ model, the $y_{ij}$s are independent 
and identically distributed, and so the graph represented by the 
adjacency matrix $A$ is a simple random graph. The fuzzy $p$-values 
are concentrated
at a value lower than 0.05 
suggesting a high probability of rejecting the null if $Y$ were
observed.
For $R\in\{1,2\}$ the 
fuzzy $p$-values are no longer concentrated lower than 0.05, 
but the distribution is
skewed to the right, which we take as evidence that there is 
correlation in $Y$ that is not captured
by the rank 1 and rank 2 models. 
The fuzzy $p$-values provide little evidence of 
residual dependence in $Y$ for models
of rank $R\geq 3$.
\section{Discussion}

In this article we presented a likelihood ratio test for relational
datasets. Unlike the previous testing literature 
for matrix normal models
that required multiple
observations, and concentrated on testing a null of separable covariances
versus an unstructured alternative, we proposed testing a null of
no row or column correlations versus an alternative of full row and
column correlations using a single observation of a network. While
the form of the null distribution of the test statistic is intractable,
we are able to simulate a reference distribution for the test statistic under 
the null due to its invariance to left and right diagonal transformations of the data.
In the power simulations of Section \ref{sec:Power-simulations} we
demonstrated the power of the test against maximally
sparse and nonseparable alternatives.

This test can be applied to a wide variety of relational data. While
the test was developed using the matrix-normal model,
we have shown that this
distributional assumptions can be greatly relaxed.
Specifically, if we consider a data
matrix $Y$ with an arbitrary matrix variate elliptically contoured
 distribution that is centered at the
zero matrix, the test statistic for testing for correlation among
the rows and among the columns of $Y$ is identical
to that of the matrix normal case. We have also
demonstrated that the test can accommodate frequently observed
 features of relational data such as non-zero mean, missing 
 diagonal and multiple observations.  
In Section \ref{sub:data_pvalues} we demonstrated an
application of the theory developed in this paper to 
binary network data 
where the matrix $Y$ is an adjacency matrix.
The method we describe for binary data can be
extended to ordinal and discrete data
that can be modeled via a latent 
matrix variate elliptically contoured distribution.

Once we reject the null hypothesis of independence
among the rows and among the columns of a relational
matrix, we are faced with the challenge of modeling
the dependence in the data.
As shown in Section \ref{sub:MLE}, for a mean zero
matrix normal distribution, the MLE is not unique.
Specifically, there is an MLE for each 
$\Sigma_{\row}\in\mathcal{S}^{m}_{+}$ given by
$( \Sigma_{\row} ,   Y^{t}{\Sigma}_{\row}^{-1}Y/m)$.
We have observed 
in separate work
that it is possible to distinguish 
between MLEs by considering their risk. However,
other than in very specialized cases (such as
equal eigenvalues of $\Sigma_{\row}$ and $\Sigma_{\col}$),
obtaining 
analytic results for identifying risk optimal MLEs is difficult. 
The presence of a non-zero mean leads to an unbounded
likelihood and further complicates the problem of estimation.
Several authors have recently considered
Bayesian and penalized likelihood approaches
 this estimation problem.
\cite{bonilla2008multi} and \cite{yu2007stochastic} studied hierarchical
Gaussian Process priors in the context of a classification problem.
In our context, this approach results in a matrix normal
prior for the mean parameters and inverse Wishart priors
for the row and column covariance matrices. A second approach
based on a mixture of independent  $L_1$ and $L_2$ penalties
on the row and column precision matrices was proposed by 
\cite{allen2010transposable}.


Computer code and data for the results in Sections 3 and 4 are available
at the authors' websites.

\appendix

\section{Proofs}

\proof[Proof of Theorem \ref{lem:null_unique}]To show that the solutions to the likelihood
equations provide a unique minimizer to the scaled log likelihood
function (Equation \ref{eq:ld}) we will show that the Hessian of
$l$ evaluated at the solutions is strictly positive definite and
then demonstrate that only a single solution is possible. We rewrite
Equation \ref{eq:ld} here, explicitly stating that we will be considering
diagonal matrices \begin{eqnarray*}
l\left(D_{\row},D_{\col};Y\right)=-2\log L\left(D_{\row},D_{col};Y\right) & = & {\rm tr}\left[D_{\row}^{-1}YD_{\col}^{-1}Y^{t}\right]-\log\left|D_{\col}^{-1}\otimes D_{\row}^{-1}\right|+c,\end{eqnarray*}
 writing for simplicity $\Psi=D_{\row}^{-1}$ and $\Gamma=D_{\col}^{-1}$
we take first derivatives with respect to the diagonal matrices of
$\Gamma$ and $\Psi$: \begin{eqnarray}
\frac{\partial l\left(D_{\row},D_{\col};Y\right)}{\partial_{\Psi}} & = & Y\Gamma Y^{t}\circ I-m\Psi^{-1}\label{eq:partial_p_mat}\\
\frac{\partial l\left(D_{\row},D_{\col};Y\right)}{\partial_{\Gamma}} & = & Y^{t}\Psi Y\circ I-m\Gamma^{-1},\label{eq:partial_g_mat}\end{eqnarray}
yielding the familiar equations used to find the maximizers of the
likelihood. Considering the singular value decomposition of $Y=ALB^{t}$,
the above can also be written as partial derivatives with respect
to the entries of $\Psi$ and $\Gamma$ (since these are diagonal matrices,
the index $k$ refers to the $k^{\rm th}$ row, $k^{\rm th}$ column  entry in the matrix): 
\begin{eqnarray}
\frac{\partial l\left(D_{\row},D_{\col};Y\right)}{\partial_{\Psi_{j}}}
 & = & \sum_{ikm}L_{i}L_{m}\Gamma_{k}\left(A_{jm}A_{ji}B_{km}B_{ki}\right)-\frac{m}{\Psi_{j}}\label{eq:partial_p}\\
\frac{\partial l\left(D_{\row},D_{\col};Y\right)}{\partial_{\Gamma_{k}}} & = & \sum_{ijm}L_{i}L_{m}\Psi_{j}\left(A_{jm}A_{ji}B_{km}B_{ki}\right)-\frac{m}{\Gamma_{k}}\label{eq:partial_g}\end{eqnarray}
To compute the Hessian, we take derivatives of equations \ref{eq:partial_p}
and \ref{eq:partial_g}, yielding the second partial derivatives of
$l_{{\rm D}}$: \begin{eqnarray*}
\frac{\partial l\left(D_{\row},D_{\col};Y\right)}{\partial_{\Psi_{j}\Gamma_{k}}} & = & f\left(j,k\right)=\sum_{im}L_{i}L_{m}\left(A_{jm}A_{ji}B_{km}B_{ki}\right)
  =  Y_{jk}^{2}\\
\frac{\partial l\left(D_{\row},D_{\col};Y\right)}{\partial_{\Psi_{j}\Psi_{l}}}=\frac{\partial l\left(D_{\row},D_{\col};Y\right)}{\partial_{\Gamma_{k}\Gamma_{m}}} & = & 0\\
\frac{\partial l\left(D_{\row},D_{\col};Y\right)}{\partial_{\Psi_{j}\Psi_{j}}} & = & \frac{m}{\Psi_{j}^{2}}\\
\frac{\partial l\left(D_{\row},D_{\col};Y\right)}{\partial_{\Gamma_{k}\Gamma_{k}}} & = & \frac{m}{\Gamma_{k}^{2}}.\end{eqnarray*}
As such, we can write the Hessian matrix $H$ as \begin{eqnarray*}
H & = & \left[\begin{matrix}m\Psi^{-2} & F\\
F^{t} & m\Gamma^{-2}\end{matrix}\right]\end{eqnarray*}
where $F=\left[f\left(j,k\right)\right]_{j,k}$. Our first observation
is that $F$ is an everywhere positive matrix since $f\left(j,k\right)=Y_{jk}^{2}>0\forall j,k$
(since $P\left(Y\not=0\right)=1$).

To show that $l_{{\rm F}}$ is minimized at the solutions to the likelihood
equations \ref{eq:partial_p_mat} and \ref{eq:partial_g_mat} we will
show that the Hessian $H$ is strictly positive definite at the solutions.
For that, we will verify Sylvester's criterion: a matrix $H$ is positive
definite if and only if all of its leading minors are positive (or
equivalently its trailing minors). First we note that the Kronecker
product of the covariances leads to a nonidentifiability in the scale
of the individual matrices, and so WLOG we let $\Psi_{1}=1$. We consider
the reparametrized problem and its' Hessian $\tilde{H}=H_{-1,-1}$,
the Hessian of the original problem with the first row and column
removed. Now, the boundedness of the likelihood function implies that
the $m-1$ leading minors are positive (since $\hat{\Psi}$ is a positive
diagonal matrix) and so we are left with verifying that the remaining
$m$ minors are positive. Abusing notation a bit and writing $\Psi$
to correspond to the reparametrized version of row precisions, we
get the first minor that includes entries other than those in $\Psi$
is \begin{eqnarray*}
\left|\begin{matrix}m\hat{\Psi}^{-2} & F_{\cdot,1}\\
F_{1,\cdot} & m\hat{\Gamma}_{1}^{-2}\end{matrix}\right| & = & \left|\frac{m}{\hat{\Gamma}_{1}^{2}}-F_{\cdot,1}\frac{\hat{\Psi}^{2}}{m}F_{1,\cdot}\right|\left|m\hat{\Psi}^{-2}\right|\\
 & := & \left|a\right|\left|b\right|\end{eqnarray*}
Clearly, $\left|b\right|>0$ as it is simply the previous minor. Now,
$a$ does not satisfy the first derivative of $l\left(D_{\row},D_{\col};Y\right)$
with respect to $\Gamma_{1}$ (Equation \ref{eq:partial_g}) and more so we note
that \begin{align*}
\frac{m}{\hat{\Gamma}_{1}^{2}} = \frac{1}{m}\left(\sum_{j}\hat{\Psi}_{j}F_{j,1}\right)^{2}
  =  \frac{1}{m}\left(F_{\cdot,1}\hat{\Psi}^{2}F_{1,\cdot}+c\right)\end{align*}
where $c$ is always positive since it is a sum of positive values.
Thus, $\left|a\right|=c/m>0$. We can apply this approach to the remaining
$m-1$ minors, where the $k$th minor is given by 
$M_k = |a|M_{k-1}$
where $\left|a\right|>0$. Thus we have verified Sylvester's criterion
and have demonstrated that the Hessian of $l\left(D_{\row},D_{\col};Y\right)$
is strictly positive definite at the solution to the likelihood equations
which means we have a local minimum of the scaled log likelihood function
(or a local maximum of the likelihood function). 

To show uniqueness we apply the Mountain Pass Theorem \citep[page 223 of][]{courant2005dirichlet}:
Since $l\left(D_{\row},D_{\col};Y\right)$ is smooth, differentiable
and coercive (that is $l\left(D_{\row},D_{\col};Y\right)\rightarrow\infty$
as $\left|D_{\col}\otimes D_{\row}\right|\rightarrow\infty$),
we have that if there are two critical points $x_{1}$ and $x_{2}$
that are strict minima (strictly positive definite Hessian), then
there must be another critical point, distinct from $x_{1}$ and $x_{2}$,
that is \emph{not} a relative minimizer of $l$. This contradicts
the above notion that the Hessian is strictly positive definite for
every critical point. \qed


\proof[Proof of Theorem \ref{thm:3wayext}]For $p$ random variables $Y_{1},\dots,Y_{p}$
where $Y_{i}\sim N_{m\times m}\left(0,d_{i}\Sigma_{\row},\Sigma_{\col}\right)$,
we write $D=\left(d_{1},\dots,d_{p}\right)$ and the scaled log likelihood
function as \begin{eqnarray*}
l\left(D,\Sigma_{\row},\Sigma_{\col}|Y_{1},\dots,Y_{p}\right) & \propto & \sum_{i=1}^{p}\frac{1}{d_{i}}{\rm tr}\left(Y_{i}\Sigma_{\col}^{-1}Y_{i}^{t}\Sigma_{\row}^{-1}\right)-mp\log\left|\Sigma_{\col}^{-1}\right|-mp\log\left|\Sigma_{\col}^{-1}\right|-m^{2}\log\left|D^{-1}\right|.\end{eqnarray*}
Taking first derivatives with respect to $\Sigma_{\row}^{-1}$, $\Sigma_{\col}^{-1}$
and $d_{i}^{-1}$ and setting them equal to zero yields the likelihood
equations: \begin{eqnarray*}
mp\Sigma_{\row} & = & \sum\frac{1}{d_{i}}Y_{i}\Sigma_{\col}^{-1}Y_{i}^{t}\\
mp\Sigma_{\col} & = & \sum\frac{1}{d_{i}}Y_{i}^{t}\Sigma_{\row}^{-1}Y_{i}\\
m^{2}d_{i} & = & {\rm tr}\left(Y_{i}\Sigma_{\col}^{-1}Y_{i}^{t}\Sigma_{\row}^{-1}\right).\end{eqnarray*}
We note that when holding $\Sigma_{\row}$ and $\Sigma_{\col}$ constant,
$l\left(D,\Sigma_{\row},\Sigma_{\col}\right)$ is a strictly convex function
of $D^{-1}$ and so with probability 1 it attains a global minimum
at points that satisfy the first derivative condition $m^{2}d_{i}={\rm tr}\left(Y_{i}\Sigma_{\col}^{-1}Y_{i}^{t}\Sigma_{\row}^{-1}\right)$.
We define the profile likelihood \begin{eqnarray*}
g\left(\Sigma_{\row}^{-1},\Sigma_{\col}^{-1}\right) & = & \inf_{D^{-1}\in\mathbb{R}_{+}^{p}}l\left(D,\Sigma_{\row},\Sigma_{\col}\right)\\
 & = & m^{2}p-mp\log\left|\Sigma_{\row}^{-1}\right|-mp\log\left|\Sigma_{\col}^{-1}\right|+m^{2}\sum\log\left|{\rm tr}\left(Y_{i}\Sigma_{\col}^{-1}Y_{i}^{t}\Sigma_{\row}^{-1}\right)\right|+c.\end{eqnarray*}
There are two nonidentifiabilities in the scaled log likelihood given
by $l\left(D,\Sigma_{\row},\Sigma_{\col}\right)=l\left(aD,b\Sigma_{\row},\frac{1}{ab}\Sigma_{\col}\right)$
for $a,b>0$ and so we restrict our domain to consider minimization
the of $l\left(D,\Sigma_{\row},\Sigma_{\col}\right)$ over $\mathbb{R}_{+}^{p}\times\mathcal{S}_{2}\times\mathcal{S}_{2}$
where $\mathcal{S}_{2}$ is the bounded subset of $\mathcal{S}_{+}^{m}$
of positive definite matrices whose largest eigenvalue is 1. This
makes the model identifiable. Since minimization of $l\left(D,\Sigma_{\row},\Sigma_{\col}\right)$
is equivalent to minimization of $g\left(\Sigma_{\row}^{-1},\Sigma_{\col}^{-1}\right)$,
we restrict minimizing $g\left(\Sigma_{\row}^{-1},\Sigma_{\col}^{-1}\right)$
to $\mathcal{S}_{2}\times\mathcal{S}_{2}$. The continuity of $g$
on $\mathcal{S}_{+}^{m}\times\mathcal{S}_{+}^{m}\supset\mathcal{S}_{2}\times\mathcal{S}_{2}$
guarantees that it will attain a minimum on $\mathcal{S}_{2}\times\mathcal{S}_{2}$
as long as for $\overline{\Sigma_{\row}^{-1}},\overline{\Sigma_{\col}^{-1}}\in\overline{\mathcal{S}_{2}}$
\begin{eqnarray*}
\lim_{\Sigma_{\row}^{-1}\rightarrow\overline{\Sigma_{\row}^{-1}}}g\left(\Sigma_{\row}^{-1},\Sigma_{\col}^{-1}\right) & = & g\left(\overline{\Sigma_{\row}^{-1}},\Sigma_{\col}^{-1}\right)\\
\lim_{\Sigma_{\col}^{-1}\rightarrow\overline{\Sigma_{\col}^{-1}}}g\left(\Sigma_{\row}^{-1},\Sigma_{\col}^{-1}\right) & = & g\left(\Sigma_{\row}^{-1},\overline{\Sigma_{\col}^{-1}}\right)\\
\lim_{\Sigma_{\col}^{-1}\rightarrow\overline{\Sigma_{\col}^{-1}}}\lim_{\Sigma_{\row}^{-1}\rightarrow\overline{\Sigma_{\row}^{-1}}}g\left(\Sigma_{\row}^{-1},\Sigma_{\col}^{-1}\right) & = & g\left(\overline{\Sigma_{\row}^{-1}},\overline{\Sigma_{\col}^{-1}}\right).\end{eqnarray*}
All three conditions are met for positive definite boundary points,
so all that remains to show is that $g\left(\Sigma_{\row}^{-1},\Sigma_{\col}^{-1}\right)\rightarrow\infty$
when a subset of the eigenvalues of $\Sigma_{\row}^{-1}$ or $\Sigma_{\col}^{-1}$
approach 0 (behavior near positive semidefinite boundary points).
It is immediate that when the eigenvectors of $\Sigma_{\row}^{-1}$
do not match the left eigenvectors of $Y_{i}$ and the eigenvectors
of $\Sigma_{\col}^{-1}$ do not match the right eigenvectors of $Y_{i}$,
${\rm tr}\left(Y_{i}\Sigma_{\col}^{-1}Y_{i}^{t}\Sigma_{\row}^{-1}\right)\rightarrow c_{i}>0$,
thus the $\log{\rm tr}\left(\cdot\right)$ term converges to a finite
constant. As such, the behavior of $g\left(\Sigma_{\row}^{-1},\Sigma_{\col}^{-1}\right)$
when subsets of eigenvalues approach zero is completely governed by
the log determinant terms, both of which will converge to $+\infty$.
\qed

\bibliographystyle{apalike}
\bibliography{biblio}

\end{document}